\newcommand{\sd}{\triangle}
\newcommand{\NN}{\mathbb{N}}
\newcommand{\ZZ}{\mathbb{Z}}
\renewcommand{\H}{\mathcal{H}}
\newcommand{\C}{\mathcal{C}}
\newcommand{\J}{\mathcal{J}}
\newcommand{\B}{\mathcal{B}}
\newcommand{\K}{\mathcal{K}}
\renewcommand{\P}{\mathcal{P}}
\newcommand{\E}{\mathcal{E}}
\newcommand{\T}{\mathcal{T}}
\newcommand{\GG}{\mathscr{G}}
\newcommand{\TT}{\mathscr{T}}
\newcommand{\EE}{\mathscr{E}}
\renewcommand{\tt}{\mathbf{t}}
\newcommand{\xx}{\mathbf{x}}
\newcommand{\lt}{\operatorname{lt}}
\newcommand{\mup}{\mu_{\mathrm{p}}}
\newcommand{\reg}{\operatorname{reg}}
\newcommand{\supp}{\operatorname{supp}}
\newcommand{\set}[1]{\left \{ #1 \right \}}
\newcommand{\ts}{\textstyle}
\renewcommand{\ss}{\scriptstyle}
\newtheorem{theorem}{Theorem}[section]
\newtheorem{lemma}[theorem]{Lemma}
\newtheorem{cor}[theorem]{Corollary}
\newtheorem{prop}[theorem]{Proposition}
\newtheorem*{claim*}{Claim}
\theoremstyle{definition}
\newtheorem{definition}[theorem]{Definition}
\newtheorem{example}[theorem]{Example}
\begin{document} 

\title[Degree and regularity of Eulerian ideals of hypergraphs]%
{Degree and regularity of\\Eulerian ideals of hypergraphs}

\author{J.~Neves}
\author{G.~Varej\~ao}
\address{Univ.~Coimbra, CMUC, Department of Mathematics, 3001-501 Coimbra, Portugal.}
\email{neves@mat.uc.pt \textit{and} g.varejao@mat.uc.pt}

\thanks{This work was partially supported by the 
Centre for Mathematics of the University of Coimbra - UIDB/00324/2020, 
funded by the Portuguese Government through FCT/MCTES. The work of the second author is funded 
by the Portuguese FCT grant 2021.05420.BD}

\keywords{Binomial ideal, degree, Castelnuovo--Mumford regularity, hypergraph}
\subjclass[2010]{13A02, 13P10, 13P25, 05E40; 05C65, 05C70}

\begin{abstract}
We define the Eulerian ideal of a $k$-uniform hypergraph and study its degree and Castelnuovo--Mumford regularity.
The main tool is a Gr\"obner basis of the ideal obtained combinatorially from the hypergraph. We define the notion 
of parity join in a hypergraph and show that the regularity of the Eulerian ideal is equal to the maximum cardinality 
of such a set of edges. The formula for the degree involves the cardinality of the set of sets of vertices, $T$,  
that admit a $T$-join. We compute the degree and regularity explicity in the cases of a complete $k$-partite hypergraph 
and a complete hypergraph of rank $3$.
\end{abstract}
\maketitle

\section{Introduction}
Eulerian ideals of graphs were introduced in \cite{joinsAndEars}, motivated  
by the notion of vanishing ideals of projective toric sets parameterized by graphs, the study of which started in \cite{ReSiVi11}. 
Both the Eulerian ideal of a graph and the vanishing ideal of the toric subset parameterized by a graph 
are homogenous binomial ideals of the polynomial ring on the edges of the graph and yield one-dimensional, Cohen--Macaulay 
quotients. These properties are favourable to the study of the Castelnuovo--Mumford regularity and the degree
of these ideals. In the case of the vanishing ideal of the projective 
toric set parameterized by a graph, there has been substantial progress 
in the computation of these invariants (\emph{cf}.~\cite{GonRen, GoReSa13, parallel,nested, even, mariarafael}) and 
results indicate that they involve the cardinality of the base field. 
On the other hand, by \cite[Proposition 2.8]{joinsAndEars} we known that in the case of the Eulerian ideal, 
the regularity and the degree do not depend on the base field and therefore they reflect the combinatorics in a clear way. 
The degree of the Eulerian ideal has a simple formula involving 
the number of connected components of $G$ and the bipartite property (\emph{cf}.~\cite[Proposition 2.11]{joinsAndEars}). 
As for the formula of the regularity, the key combinatorial notion, introduced in \cite{neves}, 
is that of \emph{parity join}. These are subsets of edges of the graph which, given any even subgraph of the graph with an \emph{even}
number of edges (for example, two edge-disjoint triangles), have no more than half the number of edges of that 
subgraph in common with it (\emph{cf}.~Definition~\ref{def: parity join}, below). Together with 
a re-interpretation of this invariant in terms of $T$-joins with cardinalities of fixed parity, these 
have enabled a combinatorial formula for the Castelnuovo--Mumford regularity of the Eulerian ideal (or, equivalently,
the quotient it yields) for a general graph (\emph{cf}.~\cite[Theorem~4.13]{neves}).

\medskip

The purpose of the present article is to generalize the notions and results of \cite{joinsAndEars,neves} 
to hypergraphs. To obtain a homogeneous ideal without changing the original definition,
we will restrict to $k$-uniform hypergraphs, i.e., hypergraphs whose edges have cardinality equal to $k$.
We describe a Gr\"obner basis for the Eulerian ideal of a $k$-uniform hypergraph and from this basis 
we derive combinatorial formulas for the degree and the Castelnuovo--Mumford regularity. Finally, we apply
these to the explicit computation of these invariants in the case of complete hypergraphs. 
\medskip

The contents are the following. Sections~\ref{sec: ideal} and \ref{sec: T-joins} are devoted to 
preliminary material; we give the definition of the Eulerian ideal of a hypergraph 
(Definition~\ref{def: Eulerian ideal}), a characterization of the homogeneous binomials in the ideal 
(Proposition~\ref{prop: characterization of the binomials of the ideal}) and we 
define $T$-joins and even subsets in a hypergraph (Definition~\ref{def: T-join and even subsets}). In 
Section~\ref{sec: Grobner basis} we describe a Gr\"obner basis of the ideal (Theorem~\ref{thm: Grobner basis}) which 
we use in Section~\ref{sec: hilbert function} to describe the Hilbert function of the quotient 
by the ideal in terms of reduced parity joins (Definition~\ref{def: parity join} and 
Theorem~\ref{thm: formula for the Hilbert function}). In Sections~\ref{sec: regularity} and \ref{sec: degree}
we give combinatorial formulas for the invariants regularity (Theorem~\ref{thm: regularity}) and 
degree (Theorem~\ref{thm: degree}). In Section~\ref{sec: complete hypergraphs}, we  
compute explicitly these invariants for some families of complete hypergraphs.

\section{The ideal}\label{sec: ideal}

Let $\H$ be a hypergraph. More precisely, let $\H=(V_\H,E_\H)$ where 
$V_\H$ is a set (of vertices) and $E_\H$ is a set 
of subsets of $V_\H$, the elements of which we call edges of $\H$.
All hypergraphs in this article are assumed to be finite, with nonempty edge set and 
$k$-uniform, with $k\geq 2$, i.e., 
their edges have common cardinality equal to $k$. 
Simple graphs are identified with $2$-uniform hypergraphs. 
Fix $K$ a field and let $K[V_\H] = K[x_v : v\in V_\H]$ and $K[E_\H] = K[t_e : e\in E_\H]$ be polynomial rings.
Define a homomorphism of graded rings, $\varphi \colon K[E_\H]\to K[V_\H]$, by 
$\ts \varphi(t_e) = \prod_{v\in e} x_v$,
for all $e\in E_\H$. 
Throughout we will use the multi-index notation for monomials, i.e., whenever $\alpha \in \NN^{E_\H}$ is a nonnegative integer 
valued function of the set of edges, $\tt^\alpha \in K[E_\H]$ will denote the monomial:
$$
\ts \tt^\alpha = \prod_{e\in E_\H} t_e^{\alpha(e)}.
$$
If $\alpha \in \ZZ^{E_\H}$ is an integer-valued function of the edge set of $\H$, 
we will use $\supp(\alpha)$ to denote its support, i.e.,  
$\supp(\alpha) = \set{e\in E_\H : \alpha(e)\not = 0}$.
Similarly for functions of the vertex set $\gamma$ and monomials $\xx^{\gamma}\in K[V_\H]$.

\begin{definition}\label{def: Eulerian ideal}
Let $\H$ be a $k$-uniform hypergraph. With the above notations, the Eulerian ideal of $\H$ is the ideal 
$I_\H = \varphi^{-1} (x^2_v-x^2_w : v,w\in V_\H)$.
\end{definition}

Since $\varphi$ is a graded homomorphism and $I_\H$ is the preimage of a homogeneous binomial ideal, 
by a standard elimination argument, the Eulerian ideal of $\H$ is also a homogeneous binomial ideal 
(\emph{Cf}., for instance, \cite[Proposition~2.2]{joinsAndEars}). 
In the graph case, the name Eulerian ideal comes from the characterization of a Gr\"obner basis of this ideal 
in terms of the Eulerian subgraphs of the graph (\emph{cf}.~\cite[Theorem~3.3]{neves}). 
In the hypergraph case, the role of Eulerian subgraphs will be taken by the even subsets 
of edges (\emph{cf}.~Definition~\ref{def: T-join and even subsets}), as we will see later. 
This is equivalent in the case of graphs and, in the case of 
hypergraphs, is a direct approach that avoids considering any of the several notions of connectivity.
The motivation for this is the next result, which is a characterization 
of the binomials in the Eulerian ideal, along the lines of \cite[Proposition~2.5]{joinsAndEars}. 
Let us first fix some notation.

\begin{definition}\label{def: degree and supp2}
(i) If $C\subset E_\H$ and $v\in V_\H$, let $\deg_C(v)$ denote $\sum_{e\in C} |\set{v}\cap e|$.
(ii) If $\alpha\in \ZZ^{E_\H}$, let
$\supp_2 (\alpha)=\set{e\in E_\H : \alpha(e) \equiv_2 1}\subset E_\H$.
\end{definition}

\begin{prop}\label{prop: characterization of the binomials of the ideal}
Let $\tt^\alpha - \tt^\beta \in K[E_\H]$ be homogeneous and 
let $C = \supp_2(\alpha-\beta)$. Then $\tt^\alpha - \tt^\beta \in I_\H$
if and only if, for every $v\in V_\H$, $\deg_C(v)$ is even.
\end{prop}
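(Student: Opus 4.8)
The plan is to unwind the definition of $I_\H$ as a preimage and reduce everything to a description of the binomials lying in $\J = (x_v^2-x_w^2 : v,w\in V_\H)\subset K[V_\H]$. For $\alpha\in\NN^{E_\H}$ write $\gamma_\alpha\in\NN^{V_\H}$ for the function $\gamma_\alpha(v)=\sum_{e\in E_\H}\alpha(e)\,|\set{v}\cap e|$, so that $\ts\varphi(\tt^\alpha)=\prod_{e\in E_\H}\big(\prod_{v\in e}x_v\big)^{\alpha(e)}=\xx^{\gamma_\alpha}$, and likewise $\varphi(\tt^\beta)=\xx^{\gamma_\beta}$. Since $\varphi$ is a ring homomorphism, Definition~\ref{def: Eulerian ideal} gives $\tt^\alpha-\tt^\beta\in I_\H$ if and only if $\xx^{\gamma_\alpha}-\xx^{\gamma_\beta}\in\J$; note the right-hand side is a difference of two monomials. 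So the proposition follows once we know exactly which such binomials lie in $\J$.

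The heart of the argument is the claim that, for $\gamma,\delta\in\NN^{V_\H}$, one has $\xx^{\gamma}-\xx^{\delta}\in\J$ if and only if $\sum_{v}\gamma(v)=\sum_{v}\delta(v)$ and $\gamma(v)\equiv_2\delta(v)$ for every $v\in V_\H$. To prove this I would fix a vertex $v_0$ and a lexicographic monomial order on $K[V_\H]$ in which $x_{v_0}$ is the smallest variable, and check that $\set{x_v^2-x_{v_0}^2 : v\neq v_0}$ is a Gr\"obner basis of $\J$: the only $S$-polynomials to consider are $S(x_v^2-x_{v_0}^2,\,x_w^2-x_{v_0}^2)=x_{v_0}^2(x_v^2-x_w^2)$ for $v\neq w$, which reduces to $0$ via $x_v^2-x_w^2=(x_v^2-x_{v_0}^2)-(x_w^2-x_{v_0}^2)$. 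Consequently the normal form of a monomial $\xx^{\gamma}$ modulo $\J$ is obtained by repeatedly replacing $x_v^2$ ($v\neq v_0$) by $x_{v_0}^2$, yielding $x_{v_0}^{\,n}\prod_{v\in\supp_2(\gamma)\setminus\set{v_0}}x_v$ with $n=\sum_v\gamma(v)-|\supp_2(\gamma)\setminus\set{v_0}|$; this monomial is determined by, and determines, the pair $\big(\sum_v\gamma(v),\,\supp_2(\gamma)\big)$, and $\xx^{\gamma}\equiv\xx^{\delta}\pmod{\J}$ exactly when the normal forms coincide, i.e.\ exactly under the stated degree-and-parity condition. (Alternatively one may quote the graph analogue, \cite[Proposition~2.5]{joinsAndEars}.) This Gr\"obner-basis/normal-form verification is the only real content; the rest is bookkeeping, and is where I would expect any friction.

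Finally I would match the two conditions against the hypotheses. Because every edge has cardinality $k$, $\sum_{v}\gamma_\alpha(v)=\sum_{e}\alpha(e)\,|e|=k\sum_{e}\alpha(e)=k\deg(\tt^\alpha)$, and likewise $\sum_v\gamma_\beta(v)=k\deg(\tt^\beta)$; since $\tt^\alpha-\tt^\beta$ is homogeneous, $\deg(\tt^\alpha)=\deg(\tt^\beta)$, so the total-degree condition $\sum_v\gamma_\alpha(v)=\sum_v\gamma_\beta(v)$ holds automatically. For the parity condition, reduce modulo $2$: for each $v$, $\gamma_\alpha(v)-\gamma_\beta(v)=\sum_{e\in E_\H}\big(\alpha(e)-\beta(e)\big)|\set{v}\cap e|\equiv_2\sum_{e\in C}|\set{v}\cap e|=\deg_C(v)$, since the mod-$2$ reduction of $\alpha-\beta$ is precisely the indicator function of $C=\supp_2(\alpha-\beta)$. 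Hence $\gamma_\alpha(v)\equiv_2\gamma_\beta(v)$ for all $v$ if and only if $\deg_C(v)$ is even for all $v$, and combining this with the previous sentence we conclude that $\tt^\alpha-\tt^\beta\in I_\H$ if and only if $\deg_C(v)$ is even for every $v\in V_\H$, as claimed.
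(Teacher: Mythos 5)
Your proof is correct, and it follows the paper's overall reduction: both arguments pass to $K[V_\H]$ via $\varphi$, both use the identity $\deg_C(v)\equiv_2 \gamma_\alpha(v)-\gamma_\beta(v)$, and both use $k$-uniformity plus homogeneity of $\tt^\alpha-\tt^\beta$ to get equality of total degrees of the images. Where you genuinely diverge is in how you decide which differences of monomials lie in $(x_v^2-x_w^2 : v,w\in V_\H)$. The paper treats the two directions separately and by hand: for necessity of the parity condition it specializes all variables except one $x_v$ to $1$ and reads $\gamma(v)\equiv_2\mu(v)$ off the one-variable identity $x_v^{\gamma(v)}-x_v^{\mu(v)}=h(x_v^2-1)$; for sufficiency it rewrites both monomials modulo the relations $x_v^2\equiv x_u^2$ into the common shape $x_u^{2|\gamma'|}\xx^\rho$. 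You instead exhibit $\set{x_v^2-x_{v_0}^2 : v\neq v_0}$ as a Gr\"obner basis and compare normal forms, which settles both directions at once and yields the stronger statement that total degree together with the pointwise parities is a complete invariant of a monomial modulo this ideal. Your route costs a little machinery (Buchberger, uniqueness of normal forms, and the small observation that reducing a monomial by monic binomials always returns a monomial, which is what makes ``equal normal forms'' equivalent to membership of the difference) but is more uniform, and it is consonant with the paper's own later use of exactly this Gr\"obner basis in the proof of Lemma~\ref{lemma: monomial is regular}. Either argument is acceptable; yours would shorten slightly if you simply cited the graph-case statement \cite[Proposition~2.5]{joinsAndEars} for the key claim, as you note.
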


\begin{proof}
Let $\varphi (\tt^\alpha - \tt^\beta) = \xx^\gamma - \xx^\mu$,
for some $\gamma,\mu \in \NN^{V_\H}$.
Then, for every $v\in V_\H$, 
$$
\renewcommand{\arraystretch}{1.3}
\begin{array}{l}
\deg_C(v) = \sum_{e\in C} |\set{v}\cap e|\\ 
\phantom{\deg_C(v) } \equiv_2 \sum_{e\in E_\H} |\set{v}\cap e|\cdot (\alpha(e)-\beta(e)) \\
\phantom{\deg_C(v) }
\equiv_2 \gamma(v)-\mu(v).
\end{array}
$$
Suppose that $\tt^\alpha-\tt^\beta \in I_\H$. Then $\xx^\gamma - \xx^\mu \in (x_v^2-x_w^2 : v,w \in V_\H)$ and therefore 
there exist 
$g_{vw}\in K[E_\H]$ such that 
$$
\ts \xx^\gamma - \xx^\mu = \sum\limits_{v,w\in E_\H} g_{vw} (x_v^2-x_w^2).
$$
Fix $v\in V_\H$. Setting all variables but $x_v$, in the expression above, equal to $1$ we get:
$$
x_v^{\gamma(v)}-x_v^{\mu(v)} = h(x_v^2-1) 
$$
for some $h\in K[x_{v}]$. Writing $h$ as linear combination of monomials in $x_v$ and 
working out the product on the left explicitly, we deduce that there exists $m\in \ZZ$ such 
that $\gamma(v) = \mu(v) + 2m$, as required.  
\medskip 

\noindent
Conversely, suppose that $\gamma(v)-\mu(v)$ is even, for every $v\in V_\H$.
Then there exists $\rho \in \set{0,1}^{V_\H}$ such that $\gamma = 2\gamma' + \rho$ and $\mu = 2\mu' + \rho$ 
for some $\gamma',\mu' \in \NN^{V_\H}$. Let us fix $u\in V_\H$
and let us work with the relations $x_v^2 \equiv x_u^2$ modulo the ideal $(x_v^2-x_w^2 : v,w \in V_\H)$. Then
$$
\ts \xx^{\gamma} - \xx^{\mu} \equiv x_u^{2|\gamma'|}\xx^\rho - x_u^{2|\mu'|}\xx^\rho,  
$$
where $|\gamma'| = \sum_{v\in V_\H} \gamma'(v)$ and $|\mu'| = \sum_{v\in V_\H} \mu'(v)$.
As $\xx^\gamma - \xx^\mu$ is homogeneous (because $\tt^\alpha-\tt^\beta$ is), we get 
$|\gamma'|=|\mu'|$ and thus the last binomial above is equal to zero.
We conclude that $\varphi(\tt^\alpha - \tt^\beta) \in (x_v^2-x_w^2 : v,w \in V_\H)$, i.e., $\tt^\alpha-\tt^\beta \in I_\H$.
\end{proof}

\begin{example}\label{exa: the bipyramid}
Let $\H$ be the $3$-uniform hypergraph on $V_\H=\set{1,2,3,4,5}$ with 
\begin{equation}\label{eq: the bipyramid edge set}
E_\H = \set{\set{1,2,4},\set{2,3,4},\set{1,3,4},\set{1,2,5},\set{2,3,5},\set{1,3,5}},
\end{equation}
depicted in Figure~\ref{fig: bipyramid} as the faces of a triangular bipyramid.
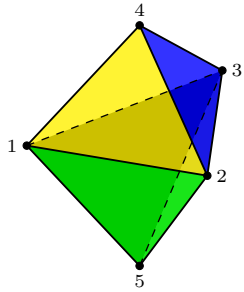
\begin{figure}[h]
\begin{tikzpicture}[scale=2]
\def\A{(0,0)}; \draw\A node[anchor = east]{$\ss 1$};
\def\B{(1.2,-.2)}; \draw\B node[anchor = west]{$\ss 2$};
\def\C{(1.3,.5)}; \draw\C node[anchor = west]{$\ss 3$};

\def\D{(.75,.8)}; \draw\D node[anchor = south]{$\ss 4$};
\def\E{(.75,-.8)}; \draw\E node[anchor = north]{$\ss 5$};

\fill[gray] \B -- \C -- \E -- \B;
\fill[black] \A -- \C -- \E -- \A;
\fill[yellow, opacity=.8] \A -- \B -- \D -- \A;
\fill[blue, opacity=.8] \B -- \C -- \D -- \B;
\fill[green, opacity=.8] \A -- \B -- \E -- \A;

\draw[line width = .7pt] \D -- \A;
\draw[line width = .7pt] \D -- \B;
\draw[line width = .7pt] \D -- \C;
\draw[line width = .7pt] \B -- \A;
\draw[line width = .7pt] \C -- \B;
\draw[dashed, line width = .5pt] \A -- \C;
\draw[line width = .7pt] \E -- \A;
\draw[line width = .7pt] \E -- \B;
\draw[dashed, line width = .5pt] \E -- \C;

\draw[fill=black]\A circle(.7pt);
\draw[fill=black]\B circle(.7pt);
\draw[fill=black]\C circle(.7pt);
\draw[fill=black]\D circle(.7pt);
\draw[fill=black]\E circle(.7pt);

\end{tikzpicture}
\label{fig: bipyramid}
\caption{A $3$-uniform graph.}
\end{figure}
Consider the set of faces 
\begin{equation}\label{eq: an even subset}
J=\set{\set{1,2,4},\set{2,3,4},\set{1,2,5},\set{2,3,5}}.
\end{equation}
One checks easily that $\deg_J(v)$ is even, for every $v\in V_\H$. Hence, , 
by Proposition~\ref{prop: characterization of the binomials of the ideal}, if 
$J=A\sqcup B$ is any partition of $J$ into parts of cardinality $2$, the binomial
$$
\ts \prod_{e\in A} t_e - \prod_{e\in B} t_e
$$
is an element of $I_\H$. For example, $t_{124}t_{234} - t_{125}t_{235}\in I_\H$.
There are $6$ binomials coming from these choices. In a minimal generating set, which
one can compute using Macaulay2, there are $8$ binomials of this type (of the $18$ 
we get by varing $J$ around the bipyramid, only $8$ are linearly independent) and there are 
$5$ other binomials of the form $t_e^2-t_f^2$, for $e,f\in E_\H$.
\end{example}

\section{$T$-joins}\label{sec: T-joins}

The characterization of the binomials in $I_\H$ given in Proposition~\ref{prop: characterization of the binomials of the ideal}
leads us to the notion of $T$-join in hypergraphs. This notion will play a key role in the computation of the regularity and the
degree of $K[E_\H]/I_\H$.

\begin{definition}\label{def: T-join and even subsets}
(i) If $T\subset V_\H$ and $J\subset E_\H$, we say that $J$ is a $T$-join if and only if 
$$
\ts \deg_J(v) = \sum_{e\in J} |\set{v}\cap e|\;\;  \text{is odd} \iff v\in T.
$$
In particular, a $\emptyset$-join is a subset of edges $J\subset E_\H$ such that $\deg_J(v)$
is even, for every $v\in V_\H$. These subsets of edges are called \emph{even}.
(ii) Let us denote the set of all even subsets of $E_\H$ by $\E(E_\H)$. 
(iii) Let us denote by $\T(V_\H)$ the set of all $T\subset V_\H$ 
for which there exists at least a $T$-join.
\end{definition}

Given $T\subset V_\H$, the question of existence of a $T$-join is pertinent. 
When $T=\emptyset$ or when $T$ is the set of vertices of a single edge, then 
a $T$-join always exists, namely the empty set and the singleton of the edge in question, 
respectively. In general, not every 
subset $T\subset V_\H$ admits a $T$-join. 
It is well-known that for $k=2$, i.e., for graphs, if a $T$-join exists for a given $T\subset V_\H$ 
then $|J\cap E_\C|$ must be even, for every connected component $\C$
of the graph $\H$. The converse also holds (\emph{cf.~\cite[Proposition~12.7]{KoVy06}}).
The situation for odd $k$ is different. For example in the hypergraph of Example~\ref{exa: the bipyramid}
the set of edges $J=\set{\set{1,2,4},\set{1,3,4},\set{2,3,4}}$, corresponding to the pyramid on the top, 
is a $\set{4}$-join.
\medskip 

The following is a minimal set of restrictions on the elements of $\T(V_\H)$, which we will use 
in the remainder of this article. 

\begin{prop}\label{prop: restrictions on T sets}
Let $T\in \T(V_\H)$ and let $J$ be a $T$-join. 
(i) If $k$ is even then $|T|$ is even.
(ii) If $k$ is odd then $|T|\equiv_2 |J|$.
In particular, when $k$ is odd there are no elements of $\E(E_\H)$ with odd cardinality. 
\end{prop}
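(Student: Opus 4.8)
The plan is to derive both parts from a single parity identity obtained by double counting. First I would compute $\sum_{v\in V_\H}\deg_J(v)$ in two ways. Interchanging the order of summation and using that $\H$ is $k$-uniform,
$$
\sum_{v\in V_\H}\deg_J(v) = \sum_{v\in V_\H}\sum_{e\in J}|\set{v}\cap e| = \sum_{e\in J}\sum_{v\in V_\H}|\set{v}\cap e| = \sum_{e\in J}|e| = k\,|J|.
$$

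Next I would reduce this identity modulo $2$. Since $J$ is a $T$-join, $\deg_J(v)$ is odd exactly for $v\in T$ and even for $v\notin T$; hence the terms indexed by $v\notin T$ contribute an even total and $\sum_{v\in V_\H}\deg_J(v)\equiv_2|T|$. Comparing with the identity above gives the congruence
$$
|T|\equiv_2 k\,|J|,
$$
which is the single fact from which everything follows.

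From here the conclusions are immediate. If $k$ is even then $k\,|J|$ is even, so $|T|$ is even, which is (i). If $k$ is odd then $k\,|J|\equiv_2|J|$, so $|T|\equiv_2|J|$, which is (ii). For the final assertion, an element $J\in\E(E_\H)$ is by definition a $\emptyset$-join, so the associated $T$ is empty and $|T|=0$; applying (ii) in the odd case gives $|J|\equiv_2 0$, i.e.\ $J$ has even cardinality.

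I do not expect a genuine obstacle here: the argument is elementary double counting. The only points that deserve a word of care are the legitimacy of swapping the two finite sums of nonnegative integers and the observation that, modulo $2$, the quantity $\sum_v\deg_J(v)$ is governed solely by $|T|$ because the contributions of the vertices outside $T$ are even.
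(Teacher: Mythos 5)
Your proof is correct and follows essentially the same route as the paper: both compute $\sum_{v\in V_\H}\deg_J(v)=k|J|$ by double counting, reduce modulo $2$ using that the vertices outside $T$ contribute evenly, and read off both parts (and the final assertion about $\E(E_\H)$) from the resulting congruence $|T|\equiv_2 k|J|$.
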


\begin{proof}
Since
$$
\ts \sum_{v\in V_\H}\deg_J(v)= \sum_{v\in T}\deg_J(v) + \sum_{v\not \in T}\deg_J(v)= k|J|,
$$
if $k$ is even, then $|T|\equiv_2 0$ and if $k$ is odd, then  
$|T|\equiv_2 |J|$.
\end{proof}

Many properties of $T$-joins in graphs extend to hypergraphs. One example 
is the relation of $T$-joins with the symmetric difference of sets, which we denote 
by $\sd$. Namely, the fact that if $J_1$ is a $T_1$-join and $J_2$ is a $T_2$-join then $J_1\sd J_2$ 
is a $(T_1\sd T_2)$-join. This follows from the observation that, for every $v\in V_\H$, 
$$
\ts \deg_{J_1\sd J_2}(v) = \sum_{e \in J_1} |\set{v}\cap e| + \sum_{e \in J_2} |\set{v}\cap e| -2\sum_{e \in J_1\cap J_2} |\set{v}\cap e|.
$$
In particular, $\E(E_H)$, the set of even subsets of $E_\H$ is closed 
under the symmetric difference.

\section{A Gr\"obner basis}\label{sec: Grobner basis}

\begin{definition}\label{def: Eulerian binomials}
(i) Denote $\TT = \{t_e^2 - t_f^2 : e,f\in E_\H\}$. (ii)
A binomial $\tt^\alpha - \tt^\beta$, is said an Eulerian binomial if 
$\tt^\alpha$ and $\tt^\beta$ are square-free, coprime, of same degree and 
$\supp(\alpha)\sqcup \supp(\beta)\subset E_\H$ belongs to $\E(E_\H)$. 
Let $\EE$ denote the finite set of all Eulerian binomials. 
(iii) Let us denote $\GG = \TT \cup \EE$.
\end{definition}

Note that, here, the even subset $\supp(\alpha)\sqcup \supp(\beta)\subset E_\H$ has even cardinality, since 
$\tt^\alpha$ and $\tt^\beta$ are square-free, coprime and of the same degree.

\begin{prop}\label{prop: set of binomials belongs to the ideal}
$\GG = \TT \cup \EE\subset I_\H$.     
\end{prop}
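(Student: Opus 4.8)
The plan is to derive both inclusions from Proposition~\ref{prop: characterization of the binomials of the ideal}, which already identifies which homogeneous binomials lie in $I_\H$: a homogeneous binomial $\tt^\alpha-\tt^\beta$ belongs to $I_\H$ precisely when $\deg_C(v)$ is even for every $v\in V_\H$, where $C=\supp_2(\alpha-\beta)$. So in each case I only need to choose suitable $\alpha,\beta$, check that the binomial is homogeneous, and compute $\supp_2(\alpha-\beta)$.

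For the inclusion $\TT\subset I_\H$, fix $e,f\in E_\H$ and write $t_e^2-t_f^2=\tt^\alpha-\tt^\beta$, where $\alpha$ takes the value $2$ at $e$ and $0$ elsewhere and $\beta$ takes the value $2$ at $f$ and $0$ elsewhere. This binomial is homogeneous of degree $2$. Every value of $\alpha-\beta$ is even, so $C=\supp_2(\alpha-\beta)=\emptyset$ and $\deg_\emptyset(v)=0$ for every $v\in V_\H$. Proposition~\ref{prop: characterization of the binomials of the ideal} then gives $t_e^2-t_f^2\in I_\H$.

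For the inclusion $\EE\subset I_\H$, let $\tt^\alpha-\tt^\beta$ be an Eulerian binomial. Since $\tt^\alpha$ and $\tt^\beta$ are square-free, $\alpha$ and $\beta$ take values in $\set{0,1}$; since they are coprime, $\supp(\alpha)\cap\supp(\beta)=\emptyset$. Hence $\alpha-\beta$ takes values in $\set{-1,0,1}$ and
$$
C=\supp_2(\alpha-\beta)=\supp(\alpha-\beta)=\supp(\alpha)\sqcup\supp(\beta),
$$
which belongs to $\E(E_\H)$ by hypothesis. By Definition~\ref{def: T-join and even subsets}(i), $C$ being even means exactly that $\deg_C(v)$ is even for every $v\in V_\H$. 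As $\tt^\alpha$ and $\tt^\beta$ have the same degree, the binomial is homogeneous, so Proposition~\ref{prop: characterization of the binomials of the ideal} applies once more and yields $\tt^\alpha-\tt^\beta\in I_\H$. Combining the two inclusions gives $\GG=\TT\cup\EE\subset I_\H$.

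There is no genuine obstacle here: the substance is already contained in Proposition~\ref{prop: characterization of the binomials of the ideal}. The only points needing a moment's care are bookkeeping ones — that for square-free coprime monomials the set $\supp_2$ of the difference coincides with the disjoint union $\supp(\alpha)\sqcup\supp(\beta)$, so the ``even subset'' condition in the definition of an Eulerian binomial is exactly the hypothesis the Proposition requires, and that ``same degree'' is precisely the homogeneity assumption the Proposition needs.
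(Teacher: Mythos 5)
Your proof is correct and follows essentially the same route as the paper: both arguments reduce each case to Proposition~\ref{prop: characterization of the binomials of the ideal}, noting that $\supp_2(\alpha-\beta)=\emptyset$ for elements of $\TT$ and $\supp_2(\alpha-\beta)=\supp(\alpha)\sqcup\supp(\beta)$ for Eulerian binomials. Your explicit checks of homogeneity and of the square-free/coprime bookkeeping are slightly more detailed than the paper's, but the substance is identical.
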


\begin{proof}
Let $\tt^\alpha-\tt^\beta \in \GG$ and denote $C=\supp_2(\alpha-\beta)$. 
By Proposition~\ref{prop: characterization of the binomials of the ideal}, we must show that 
$C$ is even. If $\tt^\alpha-\tt^\beta \in \TT$ then $C=\emptyset$
and therefore $\deg_{\emptyset} (v) = 0$, for every $v\in V_\H$ (\emph{cf}.~Definition~\ref{def: degree and supp2}). 
If $\tt^\alpha-\tt^\beta \in \EE$
then 
$$
\supp_2(\alpha-\beta)=\supp(\alpha)\sqcup \supp(\beta)
$$
which, by definition, is even.
\end{proof}

From now on, fix a total order on the set $E_\H$ and consider the associated 
graded reverse lexicographic order on $K[E_\H]$. The next result gives a sufficient 
condition for a binomial to reduce to zero modulo the set of binomials 
$\GG = \TT\cup \EE$.

\begin{prop}\label{prop: suficient condition to reduce to zero}
Let $\tt^\alpha - \tt^\beta \in K[E_\H]$ be homogeneous. If 
$\supp_2(\alpha-\beta)$
is even, then, with respect to the 
graded reverse lexicographic order, 
$\tt^\alpha-\tt^\beta$ reduces to zero modulo $\GG = \TT \cup \EE$.
\end{prop}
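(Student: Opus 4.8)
The plan is to prove, by well‑founded induction on the leading monomial $\lt(\tt^\alpha-\tt^\beta)$ with respect to the fixed grevlex order, that \emph{every} homogeneous binomial $\tt^\alpha-\tt^\beta$ with $\supp_2(\alpha-\beta)$ even reduces to zero modulo $\GG$. Since $\tt^\alpha-\tt^\beta$ is homogeneous and the elements of $\GG$ are homogeneous, all binomials met during the division lie in one graded component of $K[E_\H]$, on which grevlex is a finite total order, so the induction is legitimate. The key reduction is: assuming $\tt^\alpha\neq\tt^\beta$ with $\tt^\alpha=\lt(\tt^\alpha-\tt^\beta)$, it suffices to exhibit $g\in\GG$ with $\lt(g)\mid\tt^\alpha$. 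Indeed, reducing $\tt^\alpha-\tt^\beta$ modulo such a $g=\tt^a-\tt^b$ yields a homogeneous binomial with strictly smaller leading monomial, and it is again of the required type: by Proposition~\ref{prop: set of binomials belongs to the ideal} $g\in I_\H$, so $\supp_2(a-b)$ is even by Proposition~\ref{prop: characterization of the binomials of the ideal}; a reduction step changes $\alpha-\beta$ by $\pm(a-b)$; and $\supp_2(u+v)=\supp_2(u)\sd\supp_2(v)$ together with the closure of $\E(E_\H)$ under $\sd$ keeps the new odd‑support even. The inductive hypothesis then finishes the argument.

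I will use two elementary features of grevlex. Let $e_*\in E_\H$ be the edge whose variable $t_{e_*}$ is least in the order. For monomials of equal degree, $\tt^\gamma>\tt^\delta$ forces $\gamma(e_*)\le\delta(e_*)$; and for disjoint subsets $A,B\subset E_\H$ with $|A|=|B|$ one has $\tt^A>\tt^B$ if and only if $\min(A\cup B)\notin A$ (these are the ``least variable'' rules). In particular $\lt(t_e^2-t_{e_*}^2)=t_e^2$ for every edge $e\neq e_*$.

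Now assume $\tt^\alpha\neq\tt^\beta$ with $\tt^\alpha=\lt$. If $\alpha(e)\ge 2$ for some $e\neq e_*$, then $t_e^2=\lt(t_e^2-t_{e_*}^2)$ divides $\tt^\alpha$ and we are done. Otherwise $\alpha(e)\le 1$ for all $e\neq e_*$. Put $C=\supp_2(\alpha-\beta)$, an even set. If $C=\emptyset$, then $\alpha\equiv\beta\pmod 2$ coordinatewise; since $|\alpha|=|\beta|$ and $\alpha\neq\beta$, some $\alpha(e)$ exceeds $\beta(e)$, necessarily by an even amount, so $\alpha(e)\ge 2$, forcing $e=e_*$ and hence $\alpha(e_*)>\beta(e_*)$ — contradicting the first grevlex rule; thus $C\neq\emptyset$. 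The remaining task is to build $g\in\EE$ with $\lt(g)\mid\tt^\alpha$. Let $C_\alpha=\{e\in C:\alpha(e)\ \text{odd}\}=\supp_2(\alpha)\setminus\supp_2(\beta)\subset\supp(\alpha)$ and $C_\beta=C\setminus C_\alpha=\supp_2(\beta)\setminus\supp_2(\alpha)$. Counting odd coordinates, and using $\alpha(e)\le 1$ for $e\neq e_*$, gives $|\supp_2(\alpha)|=|\alpha|-2\lfloor\alpha(e_*)/2\rfloor$, while $|\supp_2(\beta)|\le|\beta|-2\lfloor\beta(e_*)/2\rfloor$; with $|\alpha|=|\beta|$ and $\alpha(e_*)\le\beta(e_*)$ this gives $|\supp_2(\alpha)|\ge|\supp_2(\beta)|$, hence $|C_\alpha|\ge|C_\beta|$ and $|C_\alpha|\ge|C|/2$. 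Choose a half $A$ of $C$ with $A\subset C_\alpha$ and $\min C\notin A$: this is possible because if $\min C\notin C_\alpha$ any half of $C$ inside $C_\alpha$ works, while if $\min C\in C_\alpha$ a short case analysis — splitting on whether $e_*\in C$ (so $\min C=e_*$) or not, and comparing $\alpha$ with $\beta$ on the edges $\le\min C$ via the first grevlex rule — shows that $|C_\alpha|\ge|C|/2$ is in fact strict. Then $A$ and $C\setminus A$ are disjoint square‑free monomials of degree $|C|/2$ with $A\sqcup(C\setminus A)=C\in\E(E_\H)$, so $g=\tt^A-\tt^{C\setminus A}\in\EE$; by the second grevlex rule $\lt(g)=\tt^A$, and $\tt^A\mid\tt^\alpha$ since $A\subset\supp(\alpha)$. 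This closes the induction.

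The step I expect to need the most care in a full write‑up is the last one: placing a half $A$ of $C$ inside $\supp(\alpha)$ \emph{and} arranging that $\tt^A$, not $\tt^{C\setminus A}$, is the leading term of the Eulerian binomial $g$ — equivalently, that $A$ avoids $\min C$ — and verifying this always succeeds requires sharpening the cardinality bound $|C_\alpha|\ge|C|/2$ to a strict inequality in the critical configuration. Everything else is bookkeeping with $\supp_2$, with the $\sd$‑group structure of $\E(E_\H)$, and with the two recalled properties of the grevlex order.
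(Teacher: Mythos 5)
Your proof is correct, but it is organized quite differently from the paper's. The paper inducts on the \emph{degree} of $\tt^\alpha-\tt^\beta$: it first factors out $\gcd(\tt^\alpha,\tt^\beta)$ to reduce to coprime monomials, then splits on square-freeness — if both monomials are square-free the binomial already lies in $\EE$; if only $\tt^\beta$ has a square it constructs an explicit Eulerian binomial from the first $\deg(\tt^\epsilon)$ edges of $\supp(\alpha)$ to trade the square over to the leading side; and if $\tt^\alpha$ has a square it reduces by $t_e^2-t_f^2\in\TT$, pulls out the common factor $t_f$, and drops the degree. You instead run a Noetherian induction on the leading monomial and isolate the single claim that some $\lt(g)$, $g\in\GG$, divides $\lt(\tt^\alpha-\tt^\beta)$; this is essentially a direct proof that the leading term of every homogeneous binomial with even $\supp_2$ lies in $\bigl(\lt(g):g\in\GG\bigr)$, which is closer in spirit to the standard-monomial viewpoint the paper only adopts later (Theorem~\ref{thm: characterization of standard monomials}). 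The price is the counting argument at the end: you must place a half of $C$ inside $\supp(\alpha)$ while avoiding $\min C$, and the critical strictness claim ($\min C\in C_\alpha$ forces $|C_\alpha|>|C|/2$) is only sketched. I checked that it does close: equality $|C_\alpha|=|C|/2$ forces $\beta$ to be square-free away from $e_*$ and $\lfloor\alpha(e_*)/2\rfloor=\lfloor\beta(e_*)/2\rfloor$, after which either $\min C=e_*$ contradicts $\alpha(e_*)\le\beta(e_*)$, or $e_*\notin C$ gives $\alpha(e_*)=\beta(e_*)$, $\supp(\alpha-\beta)=C$, and the grevlex condition at the least differing edge puts $\min C$ in $C_\beta$. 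So your outline is sound, but in a full write-up this is the step needing the detail (and note also, explicitly, that $|C|$ is even — immediate from homogeneity — before splitting $C$ into halves); the paper's gcd/square-free preprocessing buys it a proof with no such counting, at the cost of a longer case analysis.
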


\begin{proof} 
We will use complete induction on the degree of $\tt^\alpha-\tt^\beta$. If $\deg(\tt^\alpha-\tt^\beta)=0$, then
$\tt^\alpha-\tt^\beta = 0$ and there is nothing to show.
Assume that $\deg(\tt^\alpha-\tt^\beta)>0$. 
Let $\tt^\delta = \gcd(\tt^\alpha,\tt^\beta)$ and assume that $\tt^\delta \not = 1$. 
Let us write $\tt^\alpha = \tt^\delta \tt^\gamma$ and $\tt^\beta = \tt^\delta\tt^\mu$.
Then 
$$
\supp_2(\alpha-\beta) = \supp_2(\gamma-\mu).
$$
As $\deg(\tt^\gamma-\tt^\mu)< \deg(\tt^\alpha-\tt^\beta)$, by induction, 
$\tt^\gamma-\tt^\mu \stackrel{\GG}{\longrightarrow} 0$.
We may thus restrict to the case when $\tt^\alpha$ and $\tt^\beta$ are coprime.  
\medskip

\noindent
Assume, without loss of generality, that the leading term of $\tt^\alpha-\tt^\beta$ is $\tt^\alpha$.
If $\tt^\alpha$ and $\tt^\beta$ are both square-free, then $\tt^\alpha-\tt^\beta \in \EE$ and we have finished.
Suppose that $\tt^\alpha$ is square-free but there exists $\tt^\epsilon \not = 1$ 
such that $\tt^\beta = (\tt^\epsilon)^2 \tt^\eta$, for suitable square-free $\tt^\eta$.
Let $\tt^\sigma$ denote the product of the first $d$ edges in $\supp(\alpha)$, where $d=\deg(\tt^\epsilon)$, and consider 
$\tt^\alpha \tt^{-\sigma} - \tt^\eta \tt^\sigma$. Since $\deg(\tt^\alpha)=\deg(\tt^\beta)$ we deduce that 
$\tt^\alpha \tt^{-\sigma} - \tt^\eta \tt^\sigma$ is homogeneous. 
Furthermore, since $\tt^\alpha \tt^{-\sigma}$ and $\tt^\eta \tt^\sigma$ are square-free, coprime monomials and since 
$$
\supp(\alpha-\sigma)\sqcup \supp(\eta+\sigma) = \supp(\alpha)\sqcup \supp(\eta) = \supp_2(\alpha-2\epsilon-\eta)
$$
which, by assumption, is even, we conclude that $\tt^\alpha \tt^{-\sigma} - \tt^\eta \tt^\sigma$ is, in fact, an Eulerian binomial. 
As $\tt^\sigma$ is the product of a set of least edges in $\supp(\alpha)$, its leading term is $\tt^\alpha\tt^{-\sigma}$,
which divides $\tt^\alpha$, the leading term of $\tt^\alpha-(\tt^\epsilon)^2\tt^\sigma$. Using this Eulerian binomial in a one-step 
reduction, we get: 
$$
\tt^\alpha - (\tt^\epsilon)^2 \tt^\eta \stackrel{\EE}{\longrightarrow} \tt^\eta\bigl[(\tt^\sigma)^2- (\tt^\epsilon)^2\bigr].
$$
Hence we may reduce to the case that $\tt^\alpha$ is not square-free (since the leading term of $(\tt^\sigma)^2- (\tt^\epsilon)^2$
is certainly not square-free). Suppose then that there exists $e\in E_\H$ such that $t_e^2\mid \tt^\alpha$. Let $f\in E_G$ be an edge, least to 
$e$, such that $t_f\mid \tt^\beta$. Then $t_e^2-t_f^2$ has leading term $t_e^2$. Using this element
of $\TT$ in a one-step reduction, we get:
$$
\tt^\alpha-\tt^\beta \stackrel{\TT}{\longrightarrow} t_f(t_f\tt^\alpha t_e^{-2} - \tt^\beta t_f^{-1}).
$$
If $\gamma,\mu\in\NN^{E_G}$ are such that $\tt^\gamma = t_f\tt^\alpha t_e^{-2}$ and $\tt^\mu = \tt^\beta t_f^{-1}$
then 
$$
\supp_2(\gamma-\mu)=\supp_2(\alpha-\beta).
$$
Hence, by induction $\tt^\gamma-\tt^\mu \stackrel{\GG}{\longrightarrow} 0$.
\end{proof}

Next, let us use Propositions~\ref{prop: characterization of the binomials of the ideal} and 
\ref{prop: suficient condition to reduce to zero} to show that 
$\GG=\TT\cup \EE$ is a Gr\"obner basis for the Eulerian ideal with
respect to the graded reverse lexicographic order. 
This result generalizes \cite[Theorem~3.3]{neves}.

\begin{theorem}\label{thm: Grobner basis}
With respect to the graded reverse lexicographic order, $\GG = \TT \cup \EE$ is a Gr\"obner basis 
of the Eulerian ideal $I_\H$.     
\end{theorem}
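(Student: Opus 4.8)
The plan is to use Buchberger's criterion, but in the streamlined form enabled by the two preceding propositions. First I would establish the inclusion $\GG \subset I_\H$ (already done in Proposition~\ref{prop: set of binomials belongs to the ideal}), so it suffices to show that every element of $I_\H$ reduces to zero modulo $\GG$. Take a homogeneous binomial $\tt^\alpha - \tt^\beta \in I_\H$ — since $I_\H$ is a homogeneous binomial ideal, it is generated by such elements, so it is enough to treat these. By Proposition~\ref{prop: characterization of the binomials of the ideal}, membership $\tt^\alpha - \tt^\beta \in I_\H$ means exactly that $\deg_C(v)$ is even for every $v \in V_\H$, where $C = \supp_2(\alpha - \beta)$; by Definition~\ref{def: T-join and even subsets} this says precisely that $C = \supp_2(\alpha-\beta)$ is an even subset of $E_\H$. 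But that is exactly the hypothesis of Proposition~\ref{prop: suficient condition to reduce to zero}, which then gives $\tt^\alpha - \tt^\beta \stackrel{\GG}{\longrightarrow} 0$.

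Next I would record why this is enough to conclude that $\GG$ is a Gröbner basis. Since every homogeneous binomial in $I_\H$ reduces to zero modulo $\GG$, and $I_\H$ is spanned over $K$ by its homogeneous binomials (being a homogeneous binomial ideal), the remainder of any $f \in I_\H$ upon division by $\GG$ is zero; in particular $\init(f) \in \init(\GG)$ for every $f \in I_\H$, which is one of the standard equivalent definitions of a Gröbner basis. Alternatively, and perhaps more cleanly for the write-up: for any $f \in I_\H$, division by $\GG$ yields $f = \sum g_i h_i + r$ with $r$ reduced and $r \in I_\H$; since $\GG \subset I_\H$, the difference of two reduced binomials would again be in $I_\H$ and reduce to zero, forcing $r = 0$, and hence $\init(f)$ is divisible by some $\init(g)$, $g \in \GG$.

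I do not expect a genuine obstacle here — the theorem is essentially a formal consequence of the three propositions, with the real content having been front-loaded into Proposition~\ref{prop: suficient condition to reduce to zero} (the induction on degree handling the gcd reduction, the square-free/non-square-free casework, and the one-step reductions by elements of $\TT$ and $\EE$) and Proposition~\ref{prop: characterization of the binomials of the ideal}. The one point that warrants a careful sentence is the reduction from ``arbitrary $f \in I_\H$'' to ``homogeneous binomial in $I_\H$'': this uses that $I_\H$ is a homogeneous binomial ideal, so it has a $K$-basis of homogeneous binomials $\tt^\alpha - \tt^\beta$ (together with monomials, but a monomial in $I_\H$ would have to map under $\varphi$ into a monomial ideal generated by $x_v^2 - x_w^2$, which contains no monomials, so there are none), and reduction-to-zero is compatible with $K$-linear combinations in the relevant sense. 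I would phrase the final paragraph as: by Proposition~\ref{prop: set of binomials belongs to the ideal}, $\init_{\prec}(\GG) \subseteq \init_{\prec}(I_\H)$; conversely, given $0 \neq f \in I_\H$, write $f$ as a $K$-linear combination of homogeneous binomials in $I_\H$ and apply the division algorithm — the remainder lies in $I_\H$, is a $K$-linear combination of homogeneous binomials in $I_\H$, hence reduces to $0$ by Proposition~\ref{prop: suficient condition to reduce to zero} combined with Proposition~\ref{prop: characterization of the binomials of the ideal}, so the remainder is $0$ and $\init_\prec(f) \in \init_\prec(\GG)$; therefore $\GG$ is a Gröbner basis of $I_\H$.
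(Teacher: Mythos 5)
Your first half --- showing that every homogeneous binomial of $I_\H$ reduces to zero modulo $\GG$ by combining Propositions~\ref{prop: characterization of the binomials of the ideal} and \ref{prop: suficient condition to reduce to zero}, and hence that $(\GG)=I_\H$ --- is exactly the paper's argument. The gap is in the second half, where you pass from binomials to arbitrary elements of $I_\H$. The inference ``the remainder is a $K$-linear combination of homogeneous binomials in $I_\H$, hence reduces to $0$'' is not valid: reduction to zero is \emph{not} compatible with $K$-linear combinations, because the leading monomials of the summands can cancel, leaving a sum whose leading monomial is a lower-order monomial of one of the binomials and need not lie in $\init(\GG)$. This failure of additivity is precisely the phenomenon Buchberger's criterion exists to control, and despite announcing that criterion in your opening sentence you never check an $S$-pair. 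So, as written, the final paragraph does not establish $\init(I_\H)\subseteq\init(\GG)$.

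The paper closes the argument differently and more robustly: having shown $(\GG)=I_\H$, it verifies Buchberger's criterion directly. The point that makes this painless is that the $S$-polynomial of two elements $\tt^\alpha-\tt^\beta$ and $\tt^\gamma-\tt^\mu$ of $\GG$ is again a homogeneous \emph{binomial}, namely $\tt^{\alpha+\mu-\delta}-\tt^{\beta+\gamma-\delta}$ with $\tt^\delta=\gcd(\tt^\alpha,\tt^\gamma)$, whose $\supp_2$ is the symmetric difference $\supp_2(\alpha-\beta)\sd\supp_2(\gamma-\mu)$ of two even sets, hence even; Proposition~\ref{prop: suficient condition to reduce to zero} then applies verbatim. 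If you prefer to keep your route and avoid $S$-polynomials, you must replace the linearity step by an appeal to the fact that a binomial ideal admits a Gr\"obner basis consisting of binomials (Eisenbud--Sturmfels), so that $\init(I_\H)$ is generated by the leading terms of the binomials of $I_\H$, each of which lies in $\init(\GG)$ because the first step of any reduction to zero must cancel the leading term. Either repair works; as submitted, the proof is incomplete.
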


\begin{proof}
We will use Buchberger's criterion and we start by showing that $I_\H = (\GG)$.
By Proposition~\ref{prop: set of binomials belongs to the ideal}, $(\GG)\subset I_\H$.
To prove the reverse inclusion, we will use the fact that $I_\H$ is generated by homogeneous binomials.
Let $\tt^\alpha-\tt^\beta$ be a homogeneous binomial in $I_\H$ and let
$C=\supp_2(\alpha-\beta)$. Then, by Proposition~\ref{prop: characterization of the binomials of the ideal},
$C$ is even and hence, by Proposition~\ref{prop: suficient condition to reduce to zero}, 
$\tt^\alpha-\tt^\beta$ reduces to zero modulo $\GG=\TT\cup \EE$, which implies that 
$\tt^\alpha-\tt^\beta \in (\GG)$.
\medskip 

\noindent
If $\tt^\alpha-\tt^\beta \in \GG$ 
then $\supp_2(\alpha-\beta)$ is even. To prove that $S(f,g)$ reduces to 
zero modulo $\GG$ this property of  $f,g \in \GG$ will suffice. 
Assume, without loss of generality, that 
$f=\tt^{\alpha}-\tt^\beta$ and $g= \tt^\gamma - \tt^\mu$
with $\lt(f) = \tt^\alpha$, $\lt(g)= \tt^\gamma$. Let us denote
$$
C_1 = \supp_2 (\alpha-\beta),\quad 
C_2 = \supp_2 (\gamma-\mu)
$$
and let $\tt^\delta = \gcd(\tt^\alpha,\tt^\gamma)$. 
The $S$-polynomial of $f$ and $g$ is equal to 
$\tt^{\alpha+\mu-\delta} - \tt^{\beta+\gamma-\delta}$, which is, of course, a homogeneous binomial. 
Let 
$$
\renewcommand{\arraystretch}{1.3}
\begin{array}{l}
C = \supp_2(\alpha+\mu-\delta-\beta-\gamma+\delta)\\
\phantom{J }= \supp_2(\alpha-\beta)\sd \supp_2(\gamma-\mu)\\
\phantom{J }= C_1\sd C_2.
\end{array}
$$
Since $C_1, C_2\subset E_\H$ are even subsets, we deduce that $C$ is even. 
Therefore, by Proposition~\ref{prop: suficient condition to reduce to zero}, 
$S(f,g)$ reduces to zero modulo $\GG$.
\end{proof}

\section{The Hilbert function}\label{sec: hilbert function}

As is well-known, a Gr\"obner basis of $I_\H$ enables an explicit characterization of a monomial basis
for the quotient $K[E_\H]/I_\H$. We will use this idea to determine a combinatorial 
formula for the Hilbert function and series of $K[E_\H]/I_\H$. The result we obtain here is a generalization 
of the corresponding result for the case of graphs \cite[Theorem~2.7]{codes}. Despite that  $K=\ZZ/3$ in \cite{codes}, 
because the emphasis there is on paramaetrized codes over graphs, this result holds over any field, 
as will be shown here. The key combinatorial invariant involved is the notion of \emph{parity 
join} for hypergraphs, introduced below, which was first defined in \cite{neves}, for graphs. 
This notion is related to the notion of \emph{join} (\emph{cf}.~\cite{frank}).

\begin{definition}
Let $I\subset K[x_1,\dots,x_n]$ be a homogeneous ideal in a polynomial ring endowed 
with a choice of a monomial order. Given $d\geq 0$, let $\B_d(I)$ denote the set of degree $d$ 
monomials that do not belong to the initial ideal of $I$.
\end{definition}

It is well known that the cosets with representative in $\B_d(I)$ form a $K$-basis for the degree $d$
component of $K[x_1,\dots,x_n]/I$. (\emph{Cf}., for example, \cite[Theorem~2.6]{EnHe12}). In particular, 
$\dim \bigl(K[x_1,\dots,x_n]/I\bigr)_d = |\B_d(I)|$.
From now on, let us denote by $\ell\in E_\H$ the least edge of $\H$. Consider $(I_\H,t_\ell^2)$. 
The quotient $K[E_\H]/(I_H,t_\ell^2)$ is an Artinian ring, because $t_e^2 \in (I_\H,t_\ell^2)$,
for every $e\in E_\H$. We will use this quotient to characterize the Hilbert function of $K[E_\H]/I_\H$.
Let us first make use of the Gr\"obner basis, $\GG$, of $I_\H$ given in Definition~\ref{def: Eulerian binomials},
to give an explicit characterization of the set $\B_d(I_\H,t_\ell^2)$.

\begin{lemma}\label{lemma: basis of Artinian quotient}
If $\GG$ is the Gr\"obner basis of $I_\H$ given in Definition~\ref{def: Eulerian binomials}, then 
$$\B_d(I_\H,t_\ell^2) = \bigl\{\tt^\gamma\in K[E_\H] : \deg(\tt^\gamma)=d\text{ and }
\lt(g)\nmid \tt^\gamma,\text{ for all }g\in \GG\cup \set{t_\ell^2}\bigr\}.$$
\end{lemma}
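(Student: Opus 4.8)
The plan is to recognize this as a standard fact about Gröbner bases adapted to the specific ideal $(I_\H, t_\ell^2)$, the only subtlety being to produce a Gröbner basis of this enlarged ideal from the given Gröbner basis $\GG$ of $I_\H$. First I would observe that by definition $\B_d(I_\H, t_\ell^2)$ consists of the degree-$d$ monomials not in $\init(I_\H, t_\ell^2)$, so the content of the lemma is the identity $\init(I_\H, t_\ell^2) = (\lt(g) : g \in \GG \cup \set{t_\ell^2})$, i.e.\ that $\GG \cup \set{t_\ell^2}$ is itself a Gröbner basis of $(I_\H, t_\ell^2)$ with respect to the graded reverse lexicographic order. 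Given this, a monomial $\tt^\gamma$ fails to lie in $\init(I_\H, t_\ell^2)$ precisely when it is divisible by no $\lt(g)$ with $g \in \GG \cup \set{t_\ell^2}$, which is exactly the description in the statement.

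The main step is therefore to verify that $\GG \cup \set{t_\ell^2}$ is a Gröbner basis. One would invoke Buchberger's criterion: since $\GG$ is already a Gröbner basis of $I_\H$ (Theorem~\ref{thm: Grobner basis}), all $S$-polynomials among elements of $\GG$ reduce to zero modulo $\GG$, hence modulo $\GG \cup \set{t_\ell^2}$; it remains to check the $S$-polynomials $S(g, t_\ell^2)$ for $g \in \GG$. Here I expect to use the same mechanism as in the proof of Proposition~\ref{prop: suficient condition to reduce to zero}: write $g = \tt^\alpha - \tt^\beta$ with $\lt(g) = \tt^\alpha$, and note that $S(g, t_\ell^2)$ is, up to sign and a monomial factor, a homogeneous binomial $\tt^{\alpha'} - \tt^{\beta'}$ with $\supp_2(\alpha' - \beta') = \supp_2(\alpha - \beta)$ — because multiplying by and cancelling monomials does not change the squarefree part of the exponent difference, and $t_\ell^2$ contributes only an even exponent. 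Since $g \in \GG \subset I_\H$, Proposition~\ref{prop: characterization of the binomials of the ideal} gives that $\supp_2(\alpha - \beta)$ is even, hence so is $\supp_2(\alpha' - \beta')$, and then Proposition~\ref{prop: suficient condition to reduce to zero} shows $S(g, t_\ell^2)$ reduces to zero modulo $\GG$, a fortiori modulo $\GG \cup \set{t_\ell^2}$.

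The anticipated obstacle is purely bookkeeping: carefully identifying $S(g, t_\ell^2)$ and checking the $\supp_2$ claim when $\lt(g) = \tt^\alpha$ and $\tt^\alpha$ is or is not divisible by $t_\ell$, so that the $\gcd$ with $t_\ell^2$ is $1$, $t_\ell$, or $t_\ell^2$. In each case $S(g,t_\ell^2) = \lcm(\tt^\alpha, t_\ell^2)\bigl(\tt^\alpha{}^{-1} g - t_\ell^{-2}\, t_\ell^2\cdot(\pm 1)\bigr)$ up to reorganization, and one simply records that the resulting binomial equals $\pm(\text{monomial})\cdot(\tt^{\beta} t_\ell^j - \tt^{\alpha} t_\ell^{j'} t_\ell^{-?})$-type expression whose two squarefree parts together form $\supp(\alpha) \sqcup \supp(\beta) = \supp_2(\alpha-\beta)$, unchanged by the adjustments. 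Once this is in hand, the Gröbner basis property follows and the lemma is immediate; I would then remark that the quotient $K[E_\H]/(I_\H, t_\ell^2)$ being Artinian guarantees $\B_d(I_\H, t_\ell^2) = \emptyset$ for $d$ large, a fact that will be used subsequently but is not needed for the present statement.
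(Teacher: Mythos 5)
Your overall strategy coincides with the paper's: the lemma reduces to showing that $\GG\cup\set{t_\ell^2}$ is a Gr\"obner basis of $(I_\H,t_\ell^2)$, after which the description of $\B_d(I_\H,t_\ell^2)$ is the standard statement about standard monomials. However, the execution of the Buchberger check contains a genuine error. For $g=\tt^\alpha-\tt^\beta\in\GG$ with $\lt(g)=\tt^\alpha$, the $S$-polynomial with the monomial $t_\ell^2$ is
$$
S(g,t_\ell^2)=\frac{\operatorname{lcm}(\tt^\alpha,t_\ell^2)}{\tt^\alpha}\,(\tt^\alpha-\tt^\beta)-\operatorname{lcm}(\tt^\alpha,t_\ell^2)=-\frac{\operatorname{lcm}(\tt^\alpha,t_\ell^2)}{\tt^\alpha}\,\tt^\beta,
$$
a single monomial, not, as you claim, ``up to sign and a monomial factor, a homogeneous binomial with the same $\supp_2$.'' Proposition~\ref{prop: suficient condition to reduce to zero} is a statement about homogeneous binomials and cannot be applied to it, so that step fails as written. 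Moreover, a monomial reduces to zero modulo $\GG\cup\set{t_\ell^2}$ only if the division process eventually produces something divisible by $t_\ell^2$ (reduction by a binomial of $\GG$ turns a monomial into another monomial), so the question is not settled by parity considerations on $\supp_2$ at all.

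The repair is precisely the observation on which the paper's one-line proof hinges and which you never establish: no leading term of an element of $\GG$ is divisible by $t_\ell$. For $\TT$ this holds because $\ell$ is the least edge, so $\lt(t_e^2-t_f^2)=t_{\max(e,f)}^2$ with $\max(e,f)\neq\ell$; for an Eulerian binomial, in the graded reverse lexicographic order the \emph{trailing} monomial is the one containing the least edge of $\supp(\alpha)\sqcup\supp(\beta)$, so $t_\ell$ cannot divide the leading one. Hence $\gcd(\lt(g),t_\ell^2)=1$ for every $g\in\GG$, and either Buchberger's coprimality (product) criterion, or the explicit computation $S(g,t_\ell^2)=-t_\ell^2\tt^\beta$, which is divisible by $t_\ell^2$ and reduces to zero in one step, finishes the argument. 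The cases you leave open, in which $\gcd(\lt(g),t_\ell^2)$ equals $t_\ell$ or $t_\ell^2$, are exactly the ones that must be ruled out, since there the resulting monomial $-t_\ell\tt^\beta$ or $-\tt^\beta$ has no reason to reduce to zero.
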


\begin{proof}
Given that $\GG$ is a Gr\"obner basis of $I_\H$ with respect to the graded 
reverse lexicographic order and no leading term of an element of $\GG$ is divisible by $t_\ell$ we deduce 
that $\GG\cup \set{t_\ell^2}$ is a Gr\"obner basis for $(I_\H,t_\ell^2)$, i.e., 
the ideal of leading terms of $(I_\H,t_\ell^2)$ is generated by $\GG\cup \set{t_\ell^2}$ and the result 
follows.
\end{proof}

\begin{example}
Let us go back to the $3$-uniform hypergraph of Example~\ref{exa: the bipyramid}.
Fix the order of $E_\H$ as given in \eqref{eq: the bipyramid edge set}, so that 
the last variable is $t_\ell=t_{135}$. The following is a list of the sets $\B_d(I_\H,t_\ell^2)$,
for $d\geq 0$, computed using Macaulay2.
$$
\begin{tabular}{l|l}
$d$ & $\B_d(I_\H,t_\ell^2)$\\
\hline 
$0$ & $\set{1}$ \\
$1$ & $\set{t_{124},\; t_{234},\; t_{134},\; t_{125},\; t_{235},\; t_{135}}$ \\ 
$2$ & $\set{t_{124}t_{235},\; t_{124}t_{135},\; t_{234}t_{135},\; t_{134}t_{135},\; t_{125}t_{235},\; t_{125}t_{135},\; t_{235}t_{135}}$ \\
$3$ & $\set{t_{124}t_{235}t_{135},\; t_{125}t_{235}t_{135}}$\\
$\geq 4$ & $\emptyset$
\end{tabular}
$$
Since, for every $e\not = \ell$, $t_e^2 -t_\ell^2 \in \GG$ has leading term equal to $t_e^2$, from 
Lemma~\ref{lemma: basis of Artinian quotient}, we deduce that the elements of $\B(I_\H,t_\ell^2)$ are square-free monomials, as 
we can check directly in this example. Hence all the monomials in $\B_d(I_\H,t_\ell^2)$
are in bijection with a certain set of subsets of $d$ edges of $\H$.
\end{example}

\begin{definition}\label{def: parity join}
(i) If $J\subset E_\H$, then $J$ is said a \emph{parity join} if 
$$
\ts |J\cap C|\leq \frac{|C|}{2},
$$
for all $C\in\E(E_\H)$, with $|C|$ \emph{even}. 
(ii) Let us denote 
$$\mup(\H) = \max \set{|J| : J\text{ is a parity join}}.$$
(iii) With respect to the total order of $E_\H$ we are fixing, $J$ is called
a \emph{reduced parity join} if $J$ is a parity join and, 
for every nonempty $C\in \E(E_\H)$ such that $|C|$ is even and $|J\cap C| = \frac{|C|}{2}$,
$J$ contains the least edge of $C$. We denote the set of reduced parity joins of 
cardinality $d$ by $\J_d^{\mathrm r}(\H)$.
\end{definition}

We will see next that $\B_d(I_\H,t^2_\ell)$ and $\J_d^{\mathrm r}(\H)$ are in bijection.
The maximum cardinality of a parity join, $\mup(\H)$, is related to an important invariant of 
$\K[E_\H]/I_\H$, the Castelnuovo--Mumford regularity; which is the topic of the next section.

\begin{theorem}\label{thm: characterization of standard monomials}
The map $\B_d(I_\H,t_\ell^2)\to \J_d^{\mathrm r}(\H)$ defined by $\tt^\gamma \mapsto \supp(\gamma)$ 
is well-defined and a bijection.
\end{theorem}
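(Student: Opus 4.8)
The plan is to exploit the explicit description of $\B_d(I_\H,t_\ell^2)$ from Lemma~\ref{lemma: basis of Artinian quotient}, namely that its elements are square-free monomials whose supports avoid being divisible by any leading term of an element of $\GG$. Since the monomials are square-free, the map $\tt^\gamma\mapsto\supp(\gamma)$ is injective, so the content is to show that its image is exactly $\J_d^{\mathrm r}(\H)$; I would prove the two inclusions separately, after first pinning down precisely what the initial ideal $\init(\GG\cup\set{t_\ell^2})$ looks like on square-free monomials.

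First I would unpack the leading terms. The leading term of $t_e^2-t_f^2\in\TT$ (for $e>f$ in the total order) is $t_e^2$, and adjoining $t_\ell^2$ means that a square-free monomial $\tt^\gamma$ avoids all of these precisely when $\supp(\gamma)$ is square-free as a set — which is automatic — so the only real constraint from $\TT\cup\set{t_\ell^2}$ is square-freeness itself. The substantive condition comes from $\EE$: for an Eulerian binomial $\tt^\alpha-\tt^\beta$ with $\lt=\tt^\alpha$, writing $C=\supp(\alpha)\sqcup\supp(\beta)\in\E(E_\H)$, the monomial $\tt^\alpha$ with $\supp(\alpha)$ of size $|C|/2$ is the leading half of $C$; and $\tt^\alpha\nmid\tt^\gamma$ says $\supp(\alpha)\not\subset\supp(\gamma)$. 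So I would identify: a square-free $\tt^\gamma$ lies in $\B_d(I_\H,t_\ell^2)$ iff for every even $C\in\E(E_\H)$ with $|C|$ even, and every subset $A\subset C$ with $|A|=|C|/2$ such that $A$ is the grevlex-leading half of $C$ in some valid Eulerian binomial on $C$, we have $A\not\subset\supp(\gamma)$. The key combinatorial observation is that, given $C$ and a set $J$, the condition "$J$ contains no leading half of $C$'' is equivalent to "$|J\cap C|<|C|/2$, or $|J\cap C|=|C|/2$ and $J\cap C$ is not the $\le$-largest half, i.e.\ $J\cap C$ omits the least edge of $C$'' — because among all size-$|C|/2$ subsets of $C$, exactly those that are leading terms of Eulerian binomials supported on $C$ need to be excluded, and one checks that the leading term of the Eulerian binomial $\tt^A-\tt^{C\setminus A}$ is $\tt^A$ iff $A$ is grevlex-smaller-degree... more carefully, iff $A$ contains the least edge of $C$ (this is where the reverse-lexicographic structure enters: the least variable breaks the tie). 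Translating, $\tt^\gamma\in\B_d(I_\H,t_\ell^2)$ iff $\supp(\gamma)$ is a parity join that additionally, whenever $|\supp(\gamma)\cap C|=|C|/2$, does \emph{not} contain the least edge of $C$ — wait, this is the opposite of the definition, so I must be careful about which half is the leading term.

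The crux, and the step I expect to be the main obstacle, is getting this tie-breaking direction exactly right: for $C\in\E(E_\H)$ and a partition $C=A\sqcup B$ with $|A|=|B|$, I need to determine whether $\lt(\tt^A-\tt^B)=\tt^A$ or $\tt^B$ under grevlex. Since $\tt^A$ and $\tt^B$ have equal degree and disjoint support, grevlex compares them by: the larger is the one whose \emph{smallest}-index variable present is \emph{absent} from the other — and in grevlex "reverse lex'' means the term containing the least variable is the \emph{smaller} one. So $\lt(\tt^A-\tt^B)=\tt^A$ iff $B$ contains the least edge of $C$, i.e.\ iff $A$ does \emph{not} contain the least edge of $C$. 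Hence $\tt^\gamma\in\B_d(I_\H,t_\ell^2)$ forbids exactly those $A\subset\supp(\gamma)$ with $A$ a half of some even $C$ and $A\not\ni\min C$; equivalently, whenever $|\supp(\gamma)\cap C|=|C|/2$ we are forced to have $\min C\in\supp(\gamma)$ (otherwise take $A=\supp(\gamma)\cap C$, which is then a forbidden leading term), and whenever $|\supp(\gamma)\cap C|>|C|/2$ we could choose a half $A\subset\supp(\gamma)\cap C$ avoiding $\min C$, a contradiction, so $|\supp(\gamma)\cap C|\le|C|/2$ always. These are precisely the two clauses in Definition~\ref{def: parity join}(iii), so $\supp(\gamma)\in\J_d^{\mathrm r}(\H)$.

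For the reverse inclusion I would run the same equivalences backwards: given $J\in\J_d^{\mathrm r}(\H)$, set $\tt^\gamma=\prod_{e\in J}t_e$; square-freeness handles $\TT\cup\set{t_\ell^2}$, and for any Eulerian binomial with leading term $\tt^A$ supported on the leading half $A$ of some even $C$, the reduced-parity-join conditions force $|J\cap C|\le|C|/2$ and, in the equality case, $\min C\in J$ while $\min C\notin A$ (as $A$ is a leading half), so $A\not\subset J$; thus no leading term of $\GG\cup\set{t_\ell^2}$ divides $\tt^\gamma$, giving $\tt^\gamma\in\B_d(I_\H,t_\ell^2)$ by Lemma~\ref{lemma: basis of Artinian quotient}. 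Finally, well-definedness of the map on a given $\tt^\gamma$ is just the forward inclusion plus the count $|\supp(\gamma)|=\deg(\tt^\gamma)=d$, and surjectivity plus injectivity (from square-freeness) complete the bijection. The one piece of bookkeeping to be careful about is that not every half $A$ of an even $C$ arises as the leading half of a \emph{valid} Eulerian binomial — but it does, since $\tt^A-\tt^{C\setminus A}$ is square-free, coprime, of equal degree, with $\supp(A)\sqcup\supp(C\setminus A)=C\in\E(E_\H)$, hence genuinely in $\EE$; so every half is either a leading term or its complement is, and the analysis above is exhaustive.
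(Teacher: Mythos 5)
Your proposal is correct and follows essentially the same route as the paper's proof: use Lemma~\ref{lemma: basis of Artinian quotient}, observe that the grevlex tie-break makes $\tt^\alpha$ the leading term of an Eulerian binomial $\tt^\alpha-\tt^\beta$ exactly when $\supp(\alpha)$ omits the least edge of $C=\supp(\alpha)\sqcup\supp(\beta)$, and translate ``no leading term divides $\tt^\gamma$'' into the two clauses defining a reduced parity join, with injectivity from square-freeness. After the mid-argument self-correction you land on the right direction of the tie-break, and both inclusions match the paper's argument.
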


\begin{proof}
Let $C\subset E_\H$ be an even set of even cardinality. Suppose that $J=\supp(\gamma)$ 
satisfies $|J\cap C| > \frac{|C|}{2}$. Let $\tt^\alpha$ be the product of the first $\frac{|C|}{2}$
edges in $J$ and let $\tt^\beta$ denote the product of the remaining edges. Then 
$\tt^\alpha - \tt^\beta\in \EE$ has leading term equal to $\tt^\alpha$. 
This is also the case if $|J\cap C|=\frac{|C|}{2}$ and $J$ does not contain the least edge in $C$. 
Both cases lead to a contradiction. Since $\tt^\gamma$ is square free and thus 
$|J|=\deg(\tt^\gamma) = d$, we deduce that $J\in \J_d^{\mathrm r}(\H)$. Hence the map is well-defined. 
Using again the square-free property of the elements of $(I_\H, t^2_\ell)$ we see 
that the map is injective. 
\medskip 

\noindent
Let us now prove that the map is surjective. Let $J\subset E_\H$ be a reduced parity join and let $\tt^\gamma$
be the product of the edges in $J$. Since $\tt^\gamma$ is square-free, it suffices to show that 
$\tt^\gamma$ is not divisible by the leading term of an element in $\EE\subset \GG$.
Let us consider an Eulerian binomial $\tt^\alpha- \tt^\beta$ and assume, 
without loss of generality, that $\lt(\tt^\alpha- \tt^\beta)=\tt^\alpha$.
Then, this means that $\supp(\beta)$ contains the least edge of the corresponding
even set 
$$
C=\supp(\alpha)\sqcup \supp(\beta)\subset E_\H.
$$
Assume, with a view to a contradiction that $\tt^\alpha \mid \tt^\gamma$. Then 
$|J\cap C|\geq \deg(\tt^\alpha) = \frac{|C|}{2}$.
Since $J$ is a parity join, we deduce that $|J\cap C|= \frac{|C|}{2}$ 
and therefore $J\cap C = \supp(\alpha)$. But then, as $J$
is a reduced parity join, the least edge of $C$ must be in $J$, which is to say that 
it belongs to $\supp(\alpha)$. This is a contradiction. 
\end{proof}

Before we use Theorem~\ref{thm: characterization of standard monomials} to give a combinatorial 
formula for the Hilbert function of $K[E_\H]/I_\H$ we need to show that 
$t^2_\ell$ is a regular element for this quotient. 
We will show that, in fact, any monomial has this property. 
The proof can be taken almost \emph{verbatim} from the proof of \cite[Proposition~2.1]{joinsAndEars}.

\begin{lemma}\label{lemma: monomial is regular}
If $\tt^\gamma\in K[E_\H]$ is a monomial, then $\tt^\gamma$ is $\bigl(K[E_H]/I_\H\bigr)$-regular. 
\end{lemma}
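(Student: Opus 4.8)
The plan is to prove that if $\tt^\gamma$ is a monomial then it is a nonzerodivisor on $K[E_\H]/I_\H$, i.e., that $\tt^\gamma f \in I_\H$ implies $f \in I_\H$ for every $f \in K[E_\H]$. Since $I_\H$ is a binomial ideal and $\tt^\gamma$ is a monomial, it suffices by a standard argument to check the nonzerodivisor property on each variable $t_e$ separately (a product of nonzerodivisors is a nonzerodivisor), so I would reduce to showing: if $t_e f \in I_\H$ then $f\in I_\H$. In fact it is cleanest to work directly with the characterization of binomials in $I_\H$ given in Proposition~\ref{prop: characterization of the binomials of the ideal}, since $I_\H$ is spanned as a $K$-vector space by homogeneous binomials $\tt^\alpha-\tt^\beta$ with $\deg_{C}(v)$ even for all $v$, where $C=\supp_2(\alpha-\beta)$.

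First I would recall, following the proof of \cite[Proposition~2.1]{joinsAndEars}, that because $I_\H$ is a homogeneous binomial ideal, the quotient $K[E_\H]/I_\H$ is graded and has a $K$-basis given by equivalence classes of monomials, where two monomials $\tt^\alpha,\tt^\beta$ are equivalent exactly when $\tt^\alpha-\tt^\beta\in I_\H$; by Proposition~\ref{prop: characterization of the binomials of the ideal} this happens iff $\deg(\tt^\alpha)=\deg(\tt^\beta)$ and $\supp_2(\alpha-\beta)$ is even. Then $\tt^\gamma$ being a nonzerodivisor amounts to the statement that multiplication by $\tt^\gamma$ induces an injective map on these equivalence classes of monomials: if $\tt^{\gamma+\alpha}$ and $\tt^{\gamma+\beta}$ lie in the same class, then so do $\tt^\alpha$ and $\tt^\beta$. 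This is immediate from the characterization: $\supp_2\bigl((\gamma+\alpha)-(\gamma+\beta)\bigr)=\supp_2(\alpha-\beta)$, and the degree condition $\deg(\tt^{\gamma+\alpha})=\deg(\tt^{\gamma+\beta})$ forces $\deg(\tt^\alpha)=\deg(\tt^\beta)$. Hence $\tt^\alpha-\tt^\beta\in I_\H$, as needed.

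To turn this into a proof that $\tt^\gamma$ is regular, I would argue: suppose $\tt^\gamma f\in I_\H$ with $f\neq 0$ in the quotient; writing $f=\sum_i c_i \tt^{\alpha_i}$ with the $\tt^{\alpha_i}$ chosen so that their classes are part of a monomial $K$-basis of $K[E_\H]/I_\H$ (in particular pairwise inequivalent and $c_i\neq 0$), then $\tt^\gamma f=\sum_i c_i\tt^{\gamma+\alpha_i}$, and by the previous paragraph the monomials $\tt^{\gamma+\alpha_i}$ are still pairwise inequivalent, so their classes are $K$-linearly independent in the quotient; thus $\tt^\gamma f=0$ in the quotient forces all $c_i=0$, a contradiction. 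Therefore $\tt^\gamma$ is $\bigl(K[E_\H]/I_\H\bigr)$-regular. The main obstacle, such as it is, is purely bookkeeping: making precise the passage from "the quotient has a monomial $K$-basis" to the linear independence statement, and being careful that equivalence of monomials modulo $I_\H$ is exactly captured by Proposition~\ref{prop: characterization of the binomials of the ideal}; once that is in place the regularity is essentially formal, which is why the proof can be taken almost verbatim from \cite[Proposition~2.1]{joinsAndEars}.
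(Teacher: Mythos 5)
Your proof is correct, but it takes a genuinely different route from the paper's. The paper never works with monomial classes in $K[E_\H]/I_\H$ at all: it pushes the whole question down through $\varphi$, observing that since $I_\H=\varphi^{-1}(x_v^2-x_w^2 : v,w\in V_\H)$ and $\varphi(\tt^\gamma)$ is a monomial in $K[V_\H]$, it suffices to show that each variable $x_u$ is regular on $K[V_\H]/(x_v^2-x_w^2 : v,w\in V_\H)$; this is then done with a Gr\"obner basis argument downstairs (take grevlex with $u$ least, note that the basis $\set{x_v^2-x_u^2 : v\neq u}$ has no leading term divisible by $x_u$, so $x_u$ survives every one-step reduction). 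Your argument instead stays upstairs in $K[E_\H]$ and rests on two ingredients: (a) the quotient by $I_\H$ has a $K$-basis of equivalence classes of monomials, with $\tt^\alpha\sim\tt^\beta$ exactly when $\tt^\alpha-\tt^\beta\in I_\H$, and (b) multiplication by $\tt^\gamma$ is injective on classes because $\supp_2\bigl((\gamma+\alpha)-(\gamma+\beta)\bigr)=\supp_2(\alpha-\beta)$, so Proposition~\ref{prop: characterization of the binomials of the ideal} transfers membership of the difference in $I_\H$ back and forth. Step (b) is clean and arguably makes the combinatorial reason for regularity more transparent than the paper's proof does. The real weight of your argument sits in step (a), which you correctly flag as bookkeeping but which is not entirely free: it requires knowing that $I_\H$ is generated by \emph{pure differences} of monomials (so that $I_\H$ is exactly the $K$-span of the differences $\tt^\alpha-\tt^\beta$ it contains, and in particular contains no monomials, which is also what licenses your tacit use of homogeneity when invoking Proposition~\ref{prop: characterization of the binomials of the ideal}). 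In this paper that fact is available --- either from the elimination argument cited after Definition~\ref{def: Eulerian ideal} or from Theorem~\ref{thm: Grobner basis}, which precedes this lemma and involves no circularity --- so your proof goes through; the paper's route simply avoids having to set up the monomial-class basis by exploiting the very simple structure of the ideal $(x_v^2-x_w^2 : v,w\in V_\H)$ downstairs.
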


\begin{proof}
Given the definition 
of $I_\H$ (\emph{cf}.~Definition~\ref{def: Eulerian ideal}), it suffices to 
show that if $x_u$ is any variable in $K[V_\H] = K[x_v : v\in V_\H]$ 
and $f\in K[V_\H]$, then 
$$
x_uf \in (x_v^2-x_w^2 : v,w\in V_\H) \iff f\in (x_v^2-x_w^2 : v,w\in V_\H).
$$
Fix a total order of the vertices of $\H$ for which $u\in V_\H$ is the least vertex. 
Then 
$$
\mathfrak{g} = \set{x_v^2-x_u^2 : v\in V_\H, v\not = u}
$$
is a Gr\"obner basis of $(x_v^2-x_w^2 : v,w\in V_\H)$ with the respect to the 
graded reverse lexicographic order.
Assume $x_uf \in (x_v^2-x_w^2 : v,w\in V_\H)$. Then $x_uf$ reduces to zero modulo 
$\mathfrak{g}$. As no leading term of an element of $\mathfrak{g}$ is divisible by $x_u$, 
we conclude that $x_u$ is a factor of every one-step reduction in the division algorithm, which is to say that $f$
reduces to zero modulo $\mathfrak{g}$ and hence $f\in (x_v^2-x_w^2 : v,w\in V_\H)$.
\end{proof}

\begin{theorem}\label{thm: formula for the Hilbert function}
$\dim_K \bigl( K[E_G]/I_\H \bigr )_d = \sum_{j\geq 0} |\J_{d-2j}^{\mathrm r}(\H)|$.
\end{theorem}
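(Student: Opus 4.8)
The plan is to run an induction on $d$ using the short exact sequence furnished by the regular element $t_\ell^2$, and then to replace the Artinian quotient by its count of reduced parity joins via Theorem~\ref{thm: characterization of standard monomials}.

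First I would record that, by Lemma~\ref{lemma: monomial is regular}, the monomial $t_\ell^2$ is a nonzerodivisor on $K[E_\H]/I_\H$; since it is homogeneous of degree $2$, multiplication by it produces, for every $d$, a short exact sequence of finite-dimensional $K$-vector spaces
$$
0 \longrightarrow \bigl(K[E_\H]/I_\H\bigr)_{d-2} \stackrel{\,\cdot\, t_\ell^2\,}{\longrightarrow} \bigl(K[E_\H]/I_\H\bigr)_{d} \longrightarrow \bigl(K[E_\H]/(I_\H,t_\ell^2)\bigr)_{d} \longrightarrow 0,
$$
where $\bigl(K[E_\H]/I_\H\bigr)_{e}$ is understood to be $0$ for $e<0$; here the cokernel is identified with the degree-$d$ piece of $K[E_\H]/(I_\H,t_\ell^2)$ because that ideal is $I_\H+(t_\ell^2)$, so the kernel of the quotient map $K[E_\H]/I_\H\to K[E_\H]/(I_\H,t_\ell^2)$ is exactly the image of multiplication by $t_\ell^2$. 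Taking $K$-dimensions, and using that $\dim_K\bigl(K[E_\H]/(I_\H,t_\ell^2)\bigr)_d=|\B_d(I_\H,t_\ell^2)|$, which by Theorem~\ref{thm: characterization of standard monomials} equals $|\J_d^{\mathrm r}(\H)|$, I would obtain the recurrence
$$
\dim_K\bigl(K[E_\H]/I_\H\bigr)_{d} \;=\; \dim_K\bigl(K[E_\H]/I_\H\bigr)_{d-2} \;+\; |\J_d^{\mathrm r}(\H)|,\qquad d\geq 0 .
$$

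Then I would close by induction on $d$. For $d<0$ both sides of the claimed formula are $0$ (every index $d-2j$ is negative and $\J_e^{\mathrm r}(\H)=\emptyset$ for $e<0$), which is the base case. For $d\geq 0$, assuming the identity for $d-2$, the recurrence yields
$$
\dim_K\bigl(K[E_\H]/I_\H\bigr)_{d} \;=\; |\J_d^{\mathrm r}(\H)| + \sum_{j\geq 0}|\J_{d-2-2j}^{\mathrm r}(\H)| \;=\; \sum_{j\geq 0}|\J_{d-2j}^{\mathrm r}(\H)|,
$$
completing the argument. I do not anticipate a genuine obstacle here: the whole proof rests on the exactness of the displayed sequence, and the one nontrivial input — injectivity of multiplication by $t_\ell^2$ — is precisely Lemma~\ref{lemma: monomial is regular}; the remaining care is only in the degree shift by $2$ and in the boundary conventions of the induction, both of which are routine.
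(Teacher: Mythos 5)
Your proof is correct and follows essentially the same route as the paper: both use the regularity of $t_\ell^2$ (Lemma~\ref{lemma: monomial is regular}) to obtain the short exact sequence, identify the dimension of the degree-$d$ piece of the Artinian quotient with $|\J_d^{\mathrm r}(\H)|$ via Theorem~\ref{thm: characterization of standard monomials}, and iterate the resulting recurrence. Your write-up is in fact slightly more careful about the boundary conventions than the paper's, but the argument is the same.
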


\begin{proof}
Since $t_\ell^2$ is $\bigl(K[E_\H]/I_\H\bigr)$-regular,  
the following is a short exact sequence of graded homomorphisms of $K[E_\H]$-modules:
\begin{equation}\label{eq: L742}
0\to \frac{K[E_\H]}{I_\H}[-2] \stackrel{\cdot t_\ell^2}{\longrightarrow} 
 \frac{K[E_\H]}{I_\H}  \to \frac{K[E_\H]}{(I_\H,t^2_\ell)}\to 0.
\end{equation}
Using Theorem~\ref{thm: characterization of standard monomials} and 
the above, we deduce that 
$$
\dim_K \bigl( K[E_G]/I_\H \bigr )_d = \dim_K \bigl( K[E_G]/I_\H \bigr )_{d-2} + |\J_{d}^{\mathrm r}(\H)|.
$$
Iterating, we obtain $\dim_K \bigl( K[E_G]/I_\H \bigr )_d = \sum_{j\geq 0} |\J_{d-2j}^{\mathrm r}(\H)|$.
\end{proof}

\begin{cor}
$K[E_\H]/I_\H$ is a $1$-dimensional and Cohen--Macaulay. 
\end{cor}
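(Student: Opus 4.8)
The plan is to read off the two claims—dimension one and Cohen--Macaulayness—directly from the machinery already in place, without returning to the combinatorics.

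First I would establish that $\dim K[E_\H]/I_\H = 1$. By Theorem~\ref{thm: formula for the Hilbert function}, $\dim_K\bigl(K[E_\H]/I_\H\bigr)_d = \sum_{j\geq 0}|\J_{d-2j}^{\mathrm r}(\H)|$. Since the reduced parity joins are certain subsets of the finite set $E_\H$, there is some bound $N$ (one may take $N = \mup(\H)$) with $\J_m^{\mathrm r}(\H) = \emptyset$ for $m > N$; hence the sum is finite for each $d$, and moreover for $d$ and $d+2$ large enough the sums stabilize, so the Hilbert function is eventually constant along each residue class modulo $2$. Actually the cleaner route is via the short exact sequence \eqref{eq: L742}: the quotient $K[E_\H]/(I_\H,t_\ell^2)$ is Artinian, so its Hilbert function vanishes for large $d$; feeding this into the exact sequence gives $\dim_K\bigl(K[E_\H]/I_\H\bigr)_d = \dim_K\bigl(K[E_\H]/I_\H\bigr)_{d-2}$ for all large $d$. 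Thus the Hilbert polynomial of $K[E_\H]/I_\H$ has degree $0$, which is exactly the statement that the Krull dimension is $1$. (One should note $I_\H$ is a proper ideal so the dimension is not $0$: indeed $1 \in \B_0(I_\H,t_\ell^2)$, and by Lemma~\ref{lemma: monomial is regular} the class of $1$ survives, so the quotient is nonzero in infinitely many degrees.)

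Next I would deduce Cohen--Macaulayness. The ring $K[E_\H]/I_\H$ has dimension $1$, and by Lemma~\ref{lemma: monomial is regular} the element $t_\ell^2$ (a monomial) is a nonzerodivisor on it. A nonzerodivisor in the homogeneous maximal ideal that is part of a system of parameters witnesses depth at least one; since a single such element already gives a homogeneous system of parameters of the $1$-dimensional graded ring, we get $\operatorname{depth} = 1 = \dim$, i.e. $K[E_\H]/I_\H$ is Cohen--Macaulay. Equivalently, one can invoke the standard fact that a graded quotient of a polynomial ring is Cohen--Macaulay if and only if it admits a homogeneous nonzerodivisor whose quotient is Artinian of the right dimension drop; here $K[E_\H]/(I_\H,t_\ell^2)$ being Artinian is precisely that condition.

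The only mildly delicate point—the ``main obstacle''—is making the Artinian claim airtight and using it correctly: one must confirm that $K[E_\H]/(I_\H,t_\ell^2)$ really is Artinian (this is already asserted in Section~\ref{sec: hilbert function}, since $t_e^2 \in (I_\H,t_\ell^2)$ for every $e$, because $t_e^2 - t_\ell^2 \in \TT \subset I_\H$) and that a single homogeneous nonzerodivisor whose quotient is Artinian forces both $\dim = 1$ and $\operatorname{depth} = 1$. Both are standard (see, e.g., \cite{EnHe12}), so the proof will be short; the substance is entirely carried by Lemma~\ref{lemma: monomial is regular} and the exact sequence \eqref{eq: L742}.
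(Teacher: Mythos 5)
Your proof is correct and follows essentially the same route as the paper: the paper likewise reads off $\dim = 1$ from the Hilbert-function formula of Theorem~\ref{thm: formula for the Hilbert function} (the Hilbert polynomial is a nonzero constant because $\J_d^{\mathrm r}(\H)=\emptyset$ for $d>\mup(\H)$) and deduces Cohen--Macaulayness from the regularity of monomials in Lemma~\ref{lemma: monomial is regular}. Your version merely spells out a few points the paper leaves implicit (nonvanishing of the quotient, and the depth-equals-dimension bookkeeping).
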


\begin{proof}
For $d>\mup(\H)$ the set $\J_d^{\mathrm r}(\H)$ is empty. Therefore, 
by Theorem~\ref{thm: formula for the Hilbert function}, 
the Hilbert polynomial is a nonzero constant and hence $\dim K[E_H]/I_H=1$. 
Since any monomial  
$\tt^\gamma$ is $\bigl(K[E_\H]/I_\H\bigr)$-regular, 
we deduce that $K[E_\H]/I_\H$ is Cohen--Macaulay.
\end{proof}

\begin{cor}\label{cor: Hilbert Series}
Let $s=|E_\H|$ and, for every $0\leq d\leq s$, let 
$j_d=|\J_d^{\mathrm r}(\H)|$ denote the number of reduced parity joins of cardinality $d$. The Hilbert series 
of $K[E_\H]/I_\H$, in the variable $z$, is equal to: 
\begin{equation}\label{eq: L700}
\frac{1+sz + j_2z^2 + \cdots +j_s z^s}{1-z^2}
\end{equation}
\end{cor}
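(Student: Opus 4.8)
The plan is to feed the Hilbert-function formula of Theorem~\ref{thm: formula for the Hilbert function} into the definition of the Hilbert series and recognize the outcome as a polynomial divided by $1-z^2$. First I would write the Hilbert series as $H(z)=\sum_{d\geq 0}\dim_K\bigl(K[E_\H]/I_\H\bigr)_d\,z^d$ and substitute $\dim_K\bigl(K[E_\H]/I_\H\bigr)_d=\sum_{j\geq 0}|\J_{d-2j}^{\mathrm r}(\H)|=\sum_{j\geq 0}j_{d-2j}$, with the convention that $j_m=0$ for $m<0$. For each fixed $d$ the inner sum has only finitely many nonzero terms, so the reindexing $d\mapsto(m,j)$ with $d=m+2j$ is legitimate and gives
$$
H(z)=\sum_{d\geq 0}\sum_{j\geq 0}j_{d-2j}\,z^d=\sum_{m\geq 0}\sum_{j\geq 0}j_m\,z^{m+2j}=\Bigl(\sum_{m\geq 0}j_m\,z^m\Bigr)\Bigl(\sum_{j\geq 0}z^{2j}\Bigr).
$$

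Next I would note that $j_m=0$ whenever $m>s$, since no subset of $E_\H$ has more than $s$ elements — in fact $j_m=0$ already for $m>\mup(\H)$ by the preceding corollary — so the first factor is the polynomial $\sum_{m=0}^{s}j_m z^m$; and that $\sum_{j\geq 0}z^{2j}=\tfrac{1}{1-z^2}$, either as a formal power series or as a rational function for $|z|<1$. It then remains only to identify the first two coefficients of the numerator. One has $j_0=1$, because $\emptyset$ is the unique subset of cardinality $0$ and it is vacuously a reduced parity join. One has $j_1=s$, because no two-element subset $\{e,f\}\subset E_\H$ is even (evenness would force $v\in e \iff v\in f$ for every $v$, hence $e=f$), so every singleton $\{e\}$ is trivially a parity join and is reduced for lack of any admissible $C$; equivalently, $\B_1(I_\H,t_\ell^2)=\{t_e : e\in E_\H\}$ and one invokes Theorem~\ref{thm: characterization of standard monomials}. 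Assembling these pieces yields
$$
H(z)=\frac{1+sz+j_2z^2+\cdots+j_sz^s}{1-z^2},
$$
as claimed.

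There is no serious obstacle here: the argument is a routine generating-function manipulation once Theorem~\ref{thm: formula for the Hilbert function} is available. The only points deserving a line of justification are the interchange of the two summations — valid because in each degree only finitely many summands are nonzero — and the evaluation of the low-degree coefficients $j_0=1$ and $j_1=s$; the rest is the standard geometric-series identity.
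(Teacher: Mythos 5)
Your proposal is correct and follows essentially the same route as the paper: the paper obtains $(1-z^2)F(z)=\sum_d j_d z^d$ directly from the short exact sequence \eqref{eq: L742} together with Theorem~\ref{thm: characterization of standard monomials}, while you recover the identical identity by resumming the formula of Theorem~\ref{thm: formula for the Hilbert function}, which is just the inverse of the paper's iteration. Your explicit verification that $j_0=1$ and $j_1=s$ (via the observation that no two-element subset of $E_\H$ is even) fills in a detail the paper states without proof, and is correct.
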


\begin{proof}
Let us denote by $F(z)$ the Hilbert series of 
$K[E_\H]/I_\H$ and by $M$ the quotient $\frac{K[E_\H]}{(I_\H,t_\ell^2)}$. 
By \eqref{eq: L742} and Theorem~\ref{thm: characterization of standard monomials},
$$
\ts (1-z^2)F(z) = \sum_{d\geq 0} \bigl (\dim_K M_d\bigr ) z^d = \sum_{d\geq 0} j_d z^d.
$$
Since $j_0 = 1$, $j_1=|E_\H|=s$ and $j_d=0$, for all $d>s$, the formula follows.
\end{proof}

\section{Regularity}\label{sec: regularity}

As we saw in the previous section, the Hilbert function 
of $K[E_\H]/I_H$ is related to reduced parity joins. 
We will show that the Castelnuovo--Mumford regularity of this graded ring is related to the 
maximum cardinality of a reduced parity join. However there is a way to give this 
result in terms of the simpler notion of parity join and $\mup(\H)$ and that is what we will
do. The way to do this is to exploit the relation between parity joins and $T$-joins. 
The next result, which we use in the computation of the maximum cardinality  
is the analogue for hypergraphs of \cite[Lemmas~4.6 and 4.12]{neves}.

\begin{prop}\label{prop: parity joins as minimum cardinality fixed parity T-joins}
Let $T\in \T(V_\H)$ and let $J$ be a $T$-join. 
(i) There exist $T$-joins $J_1,J_2$ with $|J_1|\not \equiv_2 |J_2|$ if and 
only if there exists $C\in \E(E_\H)$  with $|C|$ odd. (ii) 
$J$ is a parity join if and only if $J$ has minimum cardinality among all $T$-joins 
$J'$ such that $|J'|\equiv_2 |J|$.
\end{prop}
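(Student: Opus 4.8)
The plan is to prove each of the two statements in turn, using throughout the symmetric-difference structure: if $J$ is a $T$-join and $C\in\E(E_\H)$, then $J\sd C$ is again a $T$-join, and conversely any two $T$-joins differ by an element of $\E(E_\H)$ (this follows from the displayed identity for $\deg_{J_1\sd J_2}(v)$ in Section~\ref{sec: T-joins}, since $J_1\sd J_2$ is then a $\emptyset$-join). For part (i): if there is $C\in\E(E_\H)$ with $|C|$ odd, then $J_1=J$ and $J_2=J\sd C$ are both $T$-joins, and since $|J\sd C|=|J|+|C|-2|J\cap C|\equiv_2|J|+|C|\not\equiv_2|J|$, they have cardinalities of different parity. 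Conversely, if $J_1,J_2$ are $T$-joins with $|J_1|\not\equiv_2|J_2|$, then $C=J_1\sd J_2\in\E(E_\H)$ and $|C|=|J_1|+|J_2|-2|J_1\cap J_2|\equiv_2|J_1|+|J_2|$, which is odd; so such a $C$ exists.

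For part (ii), I would prove the two implications separately. Suppose first that $J$ is a parity join. Let $J'$ be any $T$-join with $|J'|\equiv_2|J|$; I must show $|J'|\ge|J|$. Set $C=J\sd J'\in\E(E_\H)$; then $|C|=|J|+|J'|-2|J\cap J'|$ is even (as $|J|\equiv_2|J'|$), so the parity join inequality gives $|J\cap C|\le|C|/2$. Now $J\cap C=J\setminus J'$ (the elements of $J$ not in $J'$), so $|J\setminus J'|\le\tfrac12(|J\setminus J'|+|J'\setminus J|)$, i.e.\ $|J\setminus J'|\le|J'\setminus J|$, whence $|J|=|J\cap J'|+|J\setminus J'|\le|J\cap J'|+|J'\setminus J|=|J'|$. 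This gives minimality.

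Conversely, suppose $J$ has minimum cardinality among all $T$-joins $J'$ with $|J'|\equiv_2|J|$; I must show $|J\cap C|\le|C|/2$ for every even-cardinality $C\in\E(E_\H)$. Given such a $C$, form $J'=J\sd C$, which is again a $T$-join, and since $|C|$ is even, $|J'|=|J|+|C|-2|J\cap C|\equiv_2|J|$. By minimality $|J'|\ge|J|$, i.e.\ $|C|-2|J\cap C|\ge0$, which is exactly $|J\cap C|\le|C|/2$. The main subtlety to handle carefully is the bookkeeping identity $J\cap(J\sd J')=J\setminus J'$ and the parity control on $|C|$: one must note that the hypothesis of Definition~\ref{def: parity join} only constrains $C$ of \emph{even} cardinality, which is precisely why in the first implication we needed $|J|\equiv_2|J'|$ to force $|C|$ even, and why in the converse we only obtain information about such $C$ — but that is all that is asked. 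There is no real obstacle beyond this parity bookkeeping; the argument is a direct translation of the graph case and relies only on results already established in the excerpt.
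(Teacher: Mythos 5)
Your proposal is correct and follows essentially the same route as the paper: both parts rest on the observation that the $T$-joins of a fixed parity are exactly the sets $J\sd C$ with $C\in\E(E_\H)$ of even cardinality, together with the equivalence $|J\cap C|\le |C|/2 \iff |J\sd C|\ge |J|$. The extra bookkeeping you supply (e.g.\ $J\cap(J\sd J')=J\setminus J'$) is just an expanded version of the computation the paper leaves implicit.
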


\begin{proof}
(i) Let $J_1,J_2 \subset E_\H$ be $T$-joins with $|J_1|\not \equiv_2 |J_2|$. 
Then $C=J_1\sd J_2\subset E_G$ is an even set of odd cardinality. Conversely, 
if $C\in \E(E_\H)$ and $|C|$ is odd then $J\sd C$ is another $T$-join and 
$|J\sd C| \not \equiv_2 |J|$.
\medskip 

\noindent
(ii) Reasoning as before we deduce that   
\begin{equation}\label{eq: L988}
\set{J\sd C : C\in \E(E_\H) \text{ and $|C|$ is even}}.
\end{equation}
is the set of all $T$-joins, $J'\subset E_\H$, with $|J'|\equiv_2 |J|$.
If $C\in \E(E_\H)$ with $C$ even, then 
$$
\ts |J\cap C|\leq \frac{|C|}{2} \iff |J\sd C| \geq |J|.
$$
Therefore $J$ is a parity join if and only if $|J|$ is the minimum cardinality of an element of \eqref{eq: L988}.
\end{proof}

Let us denote the power set of $E_\H$ by $\P(E_\H)$. 
Since a set of edges may be identified with the corresponding monomial of $K[E_\H]$,
the graded reverse lexicographic order induces a total order on $\P(E_\H)$. 
Let us define it explicitely. Let $J_1\not = J_2\in \P(E_\H)$ 
and denote by $\tt^\alpha\not = \tt^\beta$ the square-free monomials
obtained as the products of the edges of $J_1$ and $J_2$, respectively. 
Then $\tt^\alpha \succ \tt^\beta$, in the graded reverse lexicographic order, if and only 
if either 
$$
\deg(\tt^\alpha) \succ \deg(\tt^\beta) \iff |J_1|>|J_2|
$$
or $\supp(\beta)$ contains the last edge of $\supp(\alpha-\beta)$, i.e., 
$J_2$ contains the last edge of the symmetric difference $J_1\sd J_2$. 

\begin{definition}\label{def: order on the set of parity joins}
If $J_1\not  = J_2 \in \P(E_\H)$ are elements of the power set of $E_\H$, we 
set $J_1\succ J_2$ if $|J_1|>|J_2|$ or, if $|J_1| = |J_2|$ and 
$J_2$ contains the last edge in $J_1\sd J_2$.
\end{definition}

It is clear that the partial order $\succeq$ defined by the above is a total order and, in particular, 
every (finite) subset of $\P(E_\H)$ has a minimum element 

\begin{prop}\label{prop: reduced parity joins as minima}
Let $T\in \T(V_\H)$ and let $J$ be a $T$-join. Then 
$J$ is a reduced parity join if and only if $J$ 
is the minimum with respect to $\succeq$ of the set 
\begin{equation}\label{eq: L899}
\set{J' : \text{$J'$ is a $T$-join and $|J'|\equiv_2 |J|$}}.
\end{equation}
\end{prop}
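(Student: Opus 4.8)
The plan is to combine Proposition~\ref{prop: parity joins as minimum cardinality fixed parity T-joins}(ii) with Definition~\ref{def: parity join}(iii), translating the ``reduced'' condition into a statement about minimality with respect to $\succeq$. First I would observe, exactly as in the proof of Proposition~\ref{prop: parity joins as minimum cardinality fixed parity T-joins}(ii), that the set \eqref{eq: L899} is precisely
$$
\set{J\sd C : C\in \E(E_\H)\text{ and }|C|\text{ is even}},
$$
since $J\sd C$ is a $T$-join for every even $C$, its cardinality has the same parity as $|J|$, and conversely any $T$-join $J'$ with $|J'|\equiv_2|J|$ differs from $J$ by an even set $C=J\sd J'$, which lies in $\E(E_\H)$ and has even cardinality because $|C|=|J\sd J'|\equiv_2|J|+|J'|\equiv_2 0$. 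So the two sides of the claimed equivalence both concern this family.

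Next I would handle the forward direction: assume $J$ is a reduced parity join and let $J'=J\sd C$ be an arbitrary element of \eqref{eq: L899}, with $C\in\E(E_\H)$ even and $C\neq\emptyset$ (the case $C=\emptyset$ giving $J'=J$). Since $J$ is in particular a parity join, $|J\cap C|\leq\frac{|C|}{2}$, which as noted in the proof of Proposition~\ref{prop: parity joins as minimum cardinality fixed parity T-joins}(ii) is equivalent to $|J'|\geq|J|$. If $|J'|>|J|$ then $J'\succ J$ and we are done for this $C$; the delicate case is $|J'|=|J|$, i.e. $|J\cap C|=\frac{|C|}{2}$. Here I need to compare $J$ and $J'$ under the tie-breaking rule of Definition~\ref{def: order on the set of parity joins}, so I must identify the last edge of $J\sd J'=C$ and decide which of $J,J'$ contains it. The reduced-parity-join hypothesis says $J$ contains the \emph{least} edge $e_0$ of $C$; I would then argue that the \emph{last} (i.e. greatest) edge $e_1$ of $C$ is not in $J$: indeed $e_1\in J\cap C$ would force, by a counting/exchange argument, a smaller even set witnessing failure of either the parity-join inequality or the reduced condition — more cleanly, I would invoke that $J$ is the minimum of \eqref{eq: L899} for the \emph{greatest-edge} criterion by a direct induction, or better, reduce this to the cleaner statement proved next and avoid duplicating work. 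For the converse, assume $J$ is the $\succeq$-minimum of \eqref{eq: L899}. Then for every nonempty even $C\in\E(E_\H)$ we have $J\sd C\succeq J$, hence $|J\sd C|\geq|J|$, hence $|J\cap C|\leq\frac{|C|}{2}$; this shows $J$ is a parity join and, via Proposition~\ref{prop: parity joins as minimum cardinality fixed parity T-joins}(ii), has minimum cardinality in its parity class. To get the reduced condition, suppose $|J\cap C|=\frac{|C|}{2}$ for some nonempty even $C$ but $J$ does not contain the least edge $e_0$ of $C$; then $|J\sd C|=|J|$ and $e_0\in(J\sd C)\setminus J\subset C=J\sd(J\sd C)$, so the least edge of $J\sd(J\sd C)$ lies in $J\sd C$ — I would need to check this forces the \emph{last} edge of $J\sd(J\sd C)$ to lie in $J$ to conclude $J\sd C\prec J$, contradicting minimality. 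This requires a small lemma relating ``least edge of the symmetric difference on one side'' to ``last edge on the other side.''

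The main obstacle is precisely this last point: Definition~\ref{def: parity join}(iii) phrases ``reduced'' using the \emph{least} edge of $C$, whereas Definition~\ref{def: order on the set of parity joins} tie-breaks using the \emph{last} (greatest) edge of the symmetric difference. Bridging them is not automatic, because $J\cap C$ and $C\setminus J$ each have size $\frac{|C|}{2}$ but need not interleave in any controlled way. The clean fix, which I expect to use, is to prove the following exchange principle: if $C\in\E(E_\H)$ is even and nonempty with $|J\cap C|=\frac{|C|}{2}$, then the least edge of $C$ belongs to $J\cap C$ if and only if, among all such even sets $C'$ with $J\sd C'=J\sd C$, one can choose the representative making the comparison transparent — or, more directly, replace $C$ by the even set $C_0$ consisting of the \emph{two extreme} edges involved, reducing to an elementary case. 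In practice I expect the comparison $|J\cap C|=\frac{|C|}{2}$ together with the single constraint about one extreme edge to propagate by induction on $|C|$, peeling off matched pairs of edges; the base case $|C|=2$ is immediate, since then $C=\{e,f\}$ with $e\prec f$, $J$ contains exactly one of them, $J\sd C$ is $J$ with that one swapped for the other, and ``$J$ contains the least edge $e$'' says exactly that $J\sd C$ contains $f$, the last edge of $C=J\sd(J\sd C)$, i.e. $J\prec J\sd C$ — wait, that is the wrong direction, so I must instead read the definitions carefully and orient the swap correctly: containing the \emph{least} edge of $C$ should make $J$ the \emph{smaller} element, which matches Definition~\ref{def: order on the set of parity joins} once one notes that $J$ not containing the last edge of $J\sd J'$ is the condition $J\prec J'$. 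Getting all these orientations consistent, and packaging the induction, is the real work; everything else is the bookkeeping already rehearsed in Proposition~\ref{prop: parity joins as minimum cardinality fixed parity T-joins}.
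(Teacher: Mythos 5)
Your skeleton is the paper's: identify the set \eqref{eq: L899} with $\set{J\sd C : C\in\E(E_\H),\ |C|\text{ even}}$, use $|J\cap C|\leq |C|/2 \iff |J\sd C|\geq |J|$ to dispose of the strict-inequality case, and reduce everything to the tie-break case $|J\sd C|=|J|$, i.e.\ $|J\cap C|=|C|/2$. But you do not close that case, and the bridge you propose for it would fail. The ``obstacle'' you identify --- that Definition~\ref{def: parity join}(iii) refers to the \emph{least} edge of $C$ while Definition~\ref{def: order on the set of parity joins} refers to the \emph{last} edge of $J_1\sd J_2$ --- is not real: in this paper ``last edge'' means the edge sitting in the last coordinate of the graded reverse lexicographic comparison, i.e.\ the \emph{smallest} variable, hence the \emph{least} edge of the symmetric difference. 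This is forced by the derivation of Definition~\ref{def: order on the set of parity joins} from grevlex in the paragraph preceding it (for coprime square-free monomials of equal degree, $\tt^\alpha\succ\tt^\beta$ exactly when the least variable dividing $\tt^\alpha\tt^\beta$ divides $\tt^\beta$), and by the statement in the proof of Theorem~\ref{thm: characterization of standard monomials} that $\lt(\tt^\alpha-\tt^\beta)=\tt^\alpha$ means $\supp(\beta)$ contains the \emph{least} edge of $C$. With that reading both directions are immediate and are exactly the paper's proof: if $J$ is reduced and $|J\sd C|=|J|$, then $J$ contains the least edge of $C=J\sd(J\sd C)$, so $J\sd C\succ J$; conversely, if $J$ is the $\succeq$-minimum and $|J\cap C|=|C|/2$ with $C$ nonempty, then $J\sd C\succ J$ forces $J$ to contain the least edge of $C$.

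The auxiliary lemma you propose instead --- that when $|J\cap C|=|C|/2$, ``$J$ contains the least edge of $C$'' should yield ``$J$ does not contain the greatest edge of $C$,'' so that $J$ wins a tie-break read with ``last''$=$``greatest'' --- is false, and no induction peeling off matched pairs can prove it: take $C=\set{e_1,e_2,e_3,e_4}$ with $e_1\prec e_2\prec e_3\prec e_4$ and $J\cap C=\set{e_1,e_4}$; then $J$ contains both the least and the greatest edge of $C$, while $J\sd C$ meets $C$ in $\set{e_2,e_3}$. Your own base-case computation for $|C|=2$ already produced the comparison with the wrong orientation; that was the signal that your reading of ``last'' is the opposite of the intended one, not that an extra exchange argument is needed. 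As written, the proposal leaves the forward direction unproved and rests the converse on a false lemma, so it does not constitute a proof.
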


\begin{proof}
Let $J'$ be another $T$-join with $|J'|\equiv_2 |J|$ and denote $C=J'\sd J$ 
the corres\-ponding even cardinality element of $\E(E_\H)$. Then 
$$
\ts |J\cap C|= \frac{|C|}{2} \iff |J'| = |J|.
$$
Hence, if $J$ is a reduced parity join and $|J'|=|J|$ then $J$ contains the last 
edge of $C$ which implies that $J'\succ J$. Therefore $J$ is the minimum of \eqref{eq: L899}.
Conversely if $J$ is the minimum of this set then, by Proposition~\ref{prop: parity joins as minimum cardinality fixed parity T-joins},
$J$ is a parity join. Suppose that there exists $C\in \E(E_\H)$ nonempty and such that $|J\cap C|= \frac{|C|}{2}$. Set 
$J'=J\sd C$. Then $J'$ is another $T$-join and $|J'|=|J|$. Then, since $J'\succ J$, by definition, $J$ contains 
the last edge of $C$. We conclude that $J$ is a reduced parity join.
\end{proof}

Proposition~\ref{prop: parity joins as minimum cardinality fixed parity T-joins}
gives the existence of a reduced parity join of cardinality equal to that of any given 
parity join. In particular, it allows to compute $\mup(\H)$, 
as the maximum cardinality of a reduced parity join (\emph{cf}.~Definition~\ref{def: parity join}). 

\begin{cor}\label{cor: mup as maximum of cardinalities of reduced parity joins}
$\mup(\H)$ is the maximum cardinality of a reduced parity join. 
\end{cor}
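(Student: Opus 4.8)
The plan is to deduce Corollary~\ref{cor: mup as maximum of cardinalities of reduced parity joins} from Proposition~\ref{prop: parity joins as minimum cardinality fixed parity T-joins} by the sandwich inequality. Write $r$ for the maximum cardinality of a reduced parity join. First I would observe that every reduced parity join is, in particular, a parity join (immediate from Definition~\ref{def: parity join}(iii)), so $r \leq \mup(\H)$ trivially. The content of the corollary is the reverse inequality $\mup(\H) \leq r$.

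For the reverse inequality, let $J$ be a parity join of cardinality $\mup(\H)$. Since $J$ is a $T$-join for $T = \set{v\in V_\H : \deg_J(v)\text{ odd}} \in \T(V_\H)$, I can apply Proposition~\ref{prop: reduced parity joins as minima} (or directly the existence half of the reasoning): the set $\set{J' : J'\text{ is a }T\text{-join and }|J'|\equiv_2|J|}$ is finite and nonempty (it contains $J$), hence has a minimum element $J_0$ with respect to the total order $\succeq$ of Definition~\ref{def: order on the set of parity joins}. By Proposition~\ref{prop: reduced parity joins as minima}, $J_0$ is a reduced parity join. It remains to check that $|J_0| = |J|$. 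By Proposition~\ref{prop: parity joins as minimum cardinality fixed parity T-joins}(ii), $J$ being a parity join means $|J|$ is the minimum cardinality among all $T$-joins $J'$ with $|J'|\equiv_2|J|$; since $J_0$ lies in that family, $|J_0|\geq|J|$, and since $J$ lies in that family while $J_0$ minimizes $\succeq$ (which, by Definition~\ref{def: order on the set of parity joins}, first minimizes cardinality), $|J_0|\leq|J|$. Hence $|J_0| = |J| = \mup(\H)$, so $r \geq \mup(\H)$.

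Combining the two inequalities gives $r = \mup(\H)$, which is exactly the assertion. I expect no genuine obstacle here: the corollary is a bookkeeping consequence of the two preceding propositions, and the only point requiring a moment's care is confirming that the order $\succeq$ refines comparison by cardinality, so that a $\succeq$-minimum of a parity-fixed family of $T$-joins automatically has the minimum possible cardinality in that family — but this is built into Definition~\ref{def: order on the set of parity joins}. One could alternatively phrase the whole argument in a single line by invoking Proposition~\ref{prop: parity joins as minimum cardinality fixed parity T-joins} to produce, for the maximizing parity join $J$, a reduced parity join of the same cardinality via the minimum of \eqref{eq: L899}, but spelling out the two inequalities as above makes the logic transparent.
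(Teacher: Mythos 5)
Your argument is correct and is essentially the paper's own justification, which is given in the one-sentence remark preceding the corollary: Proposition~\ref{prop: parity joins as minimum cardinality fixed parity T-joins} together with Proposition~\ref{prop: reduced parity joins as minima} produces, for any parity join, a reduced parity join of the same cardinality, and the reverse inequality is immediate since reduced parity joins are parity joins. Your write-up just makes explicit the sandwich of inequalities and the (correct) observation that the $\succeq$-minimum of the fixed-parity family of $T$-joins has minimal cardinality in that family.
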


The next theorem is a combinatorial formula for the regularity of $K[E_\H]/I_\H$.
This result is the generalization for hypergraphs of \cite[Theorem~4.13]{neves}.

\begin{theorem}\label{thm: regularity}
The regularity of $K[E_\H]/I_\H$ is equal to $\mup(\H)-1$.  
\end{theorem}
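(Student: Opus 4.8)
The strategy is to pass through the Artinian reduction $K[E_\H]/(I_\H,t_\ell^2)$, whose Hilbert function we already understand completely: by Theorem~\ref{thm: characterization of standard monomials} its degree $d$ component has dimension $|\J_d^{\mathrm r}(\H)|$, which vanishes for $d > \mup(\H)$ (by Corollary~\ref{cor: mup as maximum of cardinalities of reduced parity joins}) and is nonzero for $d = \mup(\H)$. Since $t_\ell^2$ is a regular element on $K[E_\H]/I_\H$ by Lemma~\ref{lemma: monomial is regular}, and $K[E_\H]/I_\H$ is one-dimensional Cohen--Macaulay, the quotient $A := K[E_\H]/(I_\H,t_\ell^2)$ is a zero-dimensional (Artinian) reduction of it by a single regular linear-system-of-parameters-type element of degree $2$. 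The standard fact I would invoke is that for a graded Cohen--Macaulay algebra, quotienting by a regular element $\theta$ of degree $e$ relates the regularities by $\reg\bigl(K[E_\H]/I_\H\bigr) = \reg(A) - (e-1) = \reg(A) - 1$; equivalently, one reads this off the Hilbert series. So the theorem reduces to showing $\reg(A) = \mup(\H)$.

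\textbf{Computing $\reg(A)$.} Since $A$ is Artinian, its Castelnuovo--Mumford regularity equals the top nonvanishing degree, $\reg(A) = \max\set{d : A_d \neq 0}$. By Theorem~\ref{thm: characterization of standard monomials}, $A_d \neq 0$ exactly when $\J_d^{\mathrm r}(\H) \neq \emptyset$, and by Corollary~\ref{cor: mup as maximum of cardinalities of reduced parity joins} the largest such $d$ is $\mup(\H)$. Hence $\reg(A) = \mup(\H)$, and combining with the previous paragraph gives $\reg\bigl(K[E_\H]/I_\H\bigr) = \mup(\H) - 1$. Concretely, I would phrase the whole argument through Hilbert series to keep it self-contained: by Corollary~\ref{cor: Hilbert Series}, the Hilbert series of $K[E_\H]/I_\H$ is $\frac{h(z)}{1-z^2}$ with $h(z) = 1 + sz + j_2 z^2 + \cdots + j_s z^s$, and the numerator (after cancelling any common factor with $1-z^2$, which here is none since $h(1) = \sum_d j_d \neq 0$) has degree $\mup(\H)$ because $j_d = 0$ for $d > \mup(\H)$ and $j_{\mup(\H)} \neq 0$. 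For a one-dimensional Cohen--Macaulay graded algebra, the regularity equals $\deg h(z) - (\text{order of the pole's implicit shift})$; more precisely $\reg = \deg(\text{numerator}) - (\text{number of variables in a homogeneous s.o.p.})$...

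\textbf{The cleanest route.} Rather than juggle conventions, I would argue directly: the minimal free resolution of $K[E_\H]/I_\H$ over $K[E_\H]$, tensored with $K[E_\H]/(t_\ell^2)$ (using that $t_\ell^2$ is regular), becomes a minimal free resolution of $A$ over $R := K[E_\H]/(t_\ell^2)$, up to the degree shift $[-2]$ introduced on the last syzygy by the Koszul relation on $t_\ell^2$; this is the content of the short exact sequence \eqref{eq: L742}. Taking the long exact sequence in local cohomology (or in $\mathrm{Tor}$ against the residue field), one gets $\reg\bigl(K[E_\H]/I_\H\bigr) = \reg_R(A) - 1$ — the $-1$ being exactly $(\deg t_\ell^2) - 1 = 1$. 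Then, since $R$ is itself obtained from $K[E_\H]$ by killing the regular element $t_\ell^2$ and $A$ is Artinian over $R$, one has $\reg_R(A) = \reg(A)$ computed as a $K[E_\H]$-module, which for an Artinian graded module is its top nonzero degree, namely $\mup(\H)$ as established above. The main obstacle, and the step I would be most careful about, is making the regularity bookkeeping for the degree-$2$ regular element airtight — in particular justifying $\reg\bigl(K[E_\H]/I_\H\bigr) = \reg(A) - 1$ with the correct sign and shift, and confirming that the numerator $h(z)$ shares no factor with $1-z^2$ (guaranteed by $h(1) \neq 0$), so that $\deg h(z) = \mup(\H)$ genuinely records the regularity rather than an artifact of uncancelled poles.
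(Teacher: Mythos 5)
Your proposal is correct and follows essentially the same route as the paper: both read the answer off the Artinian reduction $K[E_\H]/(I_\H,t_\ell^2)$, whose top nonvanishing degree is $\mup(\H)$ by Theorem~\ref{thm: characterization of standard monomials} and Corollary~\ref{cor: mup as maximum of cardinalities of reduced parity joins}, and then convert this into the regularity of the one-dimensional Cohen--Macaulay quotient. The only difference is bookkeeping: the paper cites the facts that for such a module the regularity equals the index of regularity, which is the degree of the Hilbert series of Corollary~\ref{cor: Hilbert Series} plus one, while you justify the same shift of $-1$ directly from the short exact sequence \eqref{eq: L742} for the degree-$2$ regular element $t_\ell^2$.
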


\begin{proof}
Since $K[E_\H]/I_\H$ is a $1$-dimensional, Cohen--Macaulay graded module, 
its regularity is equal to its index of regularity, i.e., the smallest degree 
$r$ such that $H(d)=P(d)$, for all $d\geq r$, where $H$ and $P$ denote the Hilbert function 
and the Hilbert polynomial of $K[E_\H]/I_\H$, respectively (\emph{cf}.~\cite[Corollary~4.8]{Ei05}).
In turn, the index of regularity is equal to the degree of the Hilbert series plus one (\emph{cf}.~\cite[Corollary~5.1.9]{Vi95}).
Since, by Corollary~\ref{cor: Hilbert Series}, the degree of 
the Hilbert series of $K[E_\H]/I_\H$ is equal to the maximum cardinality of a reduced 
parity join minus two and, in turn, by Propositions~\ref{prop: parity joins as minimum cardinality fixed parity T-joins} and 
\ref{prop: reduced parity joins as minima}, using Corollary~\ref{cor: mup as maximum of cardinalities of reduced parity joins}, 
the regularity of $K[E_\H]/I_\H$ is equal to $\mup(\H)-1$.
\end{proof}

\section{Degree}\label{sec: degree}

Since $\K[E_\H]/I_\H$ is a $1$-dimensional $K[E_\H]$-module its degree
coincides with its Hilbert polynomial, which, as the Hilbert series, \eqref{eq: L700}, indicates
is related to the cardinality of the set of reduced parity joins, $\sqcup_{d=0}^s \J_d^{\mathrm r}(\H)$.
In the case of graphs, because we know exactly which sets belong to $\T(V_\H)$, the degree 
of this module may be given in terms of the connected components of the graph and the bipartite property.
(\emph{Cf}.~\cite[Proposition~2.11]{joinsAndEars} and \cite[Proposition~4.10]{neves}.) In the general 
case, an alternative combinatorial characterization of the degree involves the cardinality 
of $\T(V_\H)$.

\begin{prop}\label{prop: bijections}
The map $\sqcup_{d=0}^s \J_d^{\mathrm r}(\H) \to \T(V_\H)$ sending a 
reduced parity join, $J\subset E_\H$, to the set 
$\set{v\in V_\H : \deg_J(v)\text{ is odd}}$ is a surjection.
If all elements of $\E(E_\H)$ have even cardinality then 
it is a bijection, otherwise, it is $2$-to-$1$.
\end{prop}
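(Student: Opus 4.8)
The plan is to analyze the fibers of the map $\Phi\colon \sqcup_{d=0}^s \J_d^{\mathrm r}(\H) \to \T(V_\H)$, $J\mapsto \set{v\in V_\H : \deg_J(v)\text{ is odd}}$. First I would check well-definedness: by definition of $T$-join, if $J$ is any subset of edges then $J$ is a $T$-join for $T=\set{v : \deg_J(v)\text{ odd}}$, so $\Phi(J)\in\T(V_\H)$ for every reduced parity join $J$; no extra argument is needed. Next, surjectivity: given $T\in\T(V_\H)$, pick any $T$-join $J_0$; by Proposition~\ref{prop: parity joins as minimum cardinality fixed parity T-joins}(ii) there is a $T$-join of minimum cardinality among those of cardinality $\equiv_2|J_0|$, hence a parity $T$-join, and by Proposition~\ref{prop: reduced parity joins as minima} (equivalently Proposition~\ref{prop: parity joins as minimum cardinality fixed parity T-joins} together with the passage to the $\succeq$-minimum) one obtains a reduced parity join $J$ with $\Phi(J)=T$. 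So $\Phi$ is onto.

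The substance is the fiber count. Fix $T\in\T(V_\H)$ and a $T$-join $J$; recall from Proposition~\ref{prop: parity joins as minimum cardinality fixed parity T-joins} that the full set of $T$-joins is $\set{J\sd C : C\in\E(E_\H)}$, partitioned into the two parity classes $\set{J\sd C : |C| \text{ even}}$ and (when such $C$ exist) $\set{J\sd C : |C|\text{ odd}}$. By Proposition~\ref{prop: reduced parity joins as minima}, each parity class contains exactly one reduced parity join, namely its $\succeq$-minimum. Therefore $\Phi^{-1}(T)$ has exactly one element if every $C\in\E(E_\H)$ has even cardinality, and exactly two elements otherwise — the two minima being distinct because their cardinalities have different parities. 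This is the only place where real care is needed: one must invoke Proposition~\ref{prop: parity joins as minimum cardinality fixed parity T-joins}(i) to see that the existence of an odd-cardinality even set is equivalent to the existence of two $T$-joins of opposite parity, so that the dichotomy ``one fiber element vs.\ two'' is governed by the global condition on $\E(E_\H)$ and not by the particular $T$.

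Finally I would assemble: if all elements of $\E(E_\H)$ have even cardinality, every fiber is a singleton, so $\Phi$ is a bijection; otherwise every fiber has cardinality exactly $2$, so $\Phi$ is $2$-to-$1$. The main obstacle — really the only nontrivial point — is establishing that a parity class of $T$-joins contains a unique reduced parity join; but this is precisely the content of Proposition~\ref{prop: reduced parity joins as minima} combined with the fact that $\succeq$ is a total order (hence every finite subset of $\P(E_\H)$ has a unique minimum), so the argument reduces to correctly quoting those results and tracking parities.
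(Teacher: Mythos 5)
Your proposal is correct and follows essentially the same route as the paper: surjectivity via Propositions~\ref{prop: parity joins as minimum cardinality fixed parity T-joins} and \ref{prop: reduced parity joins as minima}, and the fiber count by splitting the $T$-joins into the two parity classes, each containing a unique reduced parity join (its $\succeq$-minimum), with Proposition~\ref{prop: parity joins as minimum cardinality fixed parity T-joins}(i) governing whether one or both classes are nonempty. Your explicit remarks on well-definedness and on the distinctness of the two minima are minor additions the paper leaves implicit.
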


\begin{proof}
Fix $T\in \T(E_\H)$. By Proposition~\ref{prop: reduced parity joins as minima}, there exists 
a reduced parity join $J$ that maps to $T$. If all elements 
of $\E(E_\H)$ have even cardinality then, by Proposition~\ref{prop: parity joins as minimum cardinality fixed parity T-joins}
the set \eqref{eq: L899} is the full set of $T$-joins and, using again Proposition~\ref{prop: reduced parity joins as minima},
we deduce that there exists only one $T$-join which is a reduced parity join. If there exists 
$C\in \E(E_\H)$ of odd cardinality then $J$ and $J\sd C$ are $T$-joins with $|J\sd C| \not \equiv_2 |J|$.
Hence we may apply Proposition~\ref{prop: reduced parity joins as minima} twice to find two 
reduced parity joins mapping to $T$. As any other $T$-join must belong to one of the two 
corresponding sets of $T$-joins (the even cardinality ones and the odd cardinality ones) and, 
by Proposition~\ref{prop: reduced parity joins as minima}, only one in each set is a reduced parity join,
we deduce that the preimage of $T$ consists of exactly two reduced parity joins. 
\end{proof}

\begin{theorem}\label{thm: degree}
If $s=|E_\H|$, then the degree of $K[E_\H]/I_\H$ is equal to 
$\frac12\sum_{d=0}^s|\J^{\mathrm r}_ d(\H)|$.
Moreover, if no element of $\E(E_\H)$ has odd cardinality, then
the degree is $\frac12|\T(V_\H)|$, otherwise the degree is $|\T(V_\H)|$.
\end{theorem}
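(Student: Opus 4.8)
The plan is to read off the degree directly from the Hilbert series. Since $K[E_\H]/I_\H$ is $1$-dimensional and Cohen--Macaulay, its degree equals the value of its Hilbert polynomial, which in turn equals the numerator of the Hilbert series evaluated at $z=1$. By Corollary~\ref{cor: Hilbert Series}, the Hilbert series is
\[
\frac{1+sz+j_2z^2+\cdots+j_sz^s}{1-z^2},
\]
where $j_d=|\J_d^{\mathrm r}(\H)|$. First I would write $1-z^2=(1-z)(1+z)$ and observe that, because $\dim K[E_\H]/I_\H=1$, the numerator $p(z)=\sum_{d=0}^s j_d z^d$ must be divisible by $1-z$ (equivalently $p(1)=0$ is \emph{not} what happens; rather $p(z)/(1-z)$ is a polynomial and the degree is its value at $z=1$, i.e. $p(z)/(1+z)$ evaluated at $z=1$ after cancelling the $1-z$ factor — or more simply, the degree of a Cohen--Macaulay module of dimension $1$ with Hilbert series $q(z)/(1-z)$ is $q(1)$). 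The clean route: rewrite the Hilbert series with denominator $1-z$ by cancelling, or invoke the standard fact that for a $1$-dimensional graded module the multiplicity equals $\lim_{z\to 1}(1-z)F(z)$. Carrying this out, $\lim_{z\to 1}(1-z)\,\frac{p(z)}{(1-z)(1+z)}=\frac{p(1)}{2}=\frac12\sum_{d=0}^s j_d$. This gives the first formula, $\deg K[E_\H]/I_\H=\tfrac12\sum_{d=0}^s|\J_d^{\mathrm r}(\H)|$.

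Next I would pass to the statement in terms of $\T(V_\H)$ using Proposition~\ref{prop: bijections}. That proposition gives a surjection
\[
\ts\bigsqcup_{d=0}^s \J_d^{\mathrm r}(\H)\longrightarrow \T(V_\H)
\]
which is a bijection when every element of $\E(E_\H)$ has even cardinality and $2$-to-$1$ otherwise. Hence $\sum_{d=0}^s|\J_d^{\mathrm r}(\H)|=|\T(V_\H)|$ in the first case and $=2|\T(V_\H)|$ in the second. Substituting into $\deg K[E_\H]/I_\H=\tfrac12\sum_{d=0}^s|\J_d^{\mathrm r}(\H)|$ yields $\deg=\tfrac12|\T(V_\H)|$ when no element of $\E(E_\H)$ has odd cardinality, and $\deg=|\T(V_\H)|$ otherwise, as claimed.

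The only genuinely technical point is justifying that the degree (multiplicity) of the $1$-dimensional Cohen--Macaulay module $K[E_\H]/I_\H$ is computed as $\lim_{z\to1}(1-z)F(z)$, equivalently as the numerator at $z=1$ once the Hilbert series is written over $1-z$; this is a standard fact (see e.g.\ the references already invoked in the proof of Theorem~\ref{thm: regularity}, or \cite[Corollary~5.1.9]{Vi95}), and I would simply cite it. Everything else is bookkeeping: the cancellation of the factor $1-z$ in $\frac{p(z)}{1-z^2}$ is automatic because the module has positive-dimensional quotient, and the case split is exactly the one already packaged in Proposition~\ref{prop: bijections}. So I do not anticipate a real obstacle — the main work has been front-loaded into Corollary~\ref{cor: Hilbert Series} and Proposition~\ref{prop: bijections}, and this theorem is their immediate consequence.
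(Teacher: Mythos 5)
Your proposal is correct and follows essentially the same route as the paper: the authors likewise compute the degree as $\lim_{z\to 1}(1-z)F(z)=\tfrac12\sum_{d=0}^s j_d$ from the Hilbert series of Corollary~\ref{cor: Hilbert Series} and then invoke Proposition~\ref{prop: bijections} for the case split in terms of $|\T(V_\H)|$. The brief digression about divisibility of the numerator by $1-z$ is unnecessary but harmless, since the limit computation you settle on is exactly the one used in the paper.
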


\begin{proof}
Since $K[E_\H]/I_\H$ is $1$-dimensional, the degree of this $K[E_\H]$-module
may be obtained by multiplying its Hilbert series by $(1-z)$ and setting $z=1$.
Using the rational form of the Hilbert series given in \eqref{eq: L700}, we deduce that 
$$
\ts \deg K[E_\H]/I_\H = \frac{1}{2}\sum_{d=1}^s j_d = \frac12 \sum_{d=0}^s|\J^{\mathrm r}(\H)|.
$$
The rest of the statement follows from Proposition~\ref{prop: bijections}.
\end{proof}

\section{Complete Hypergraphs}\label{sec: complete hypergraphs}

In this section we compute the degree and the regularity of 
$K[E_\H]/I_H$ when $\H$ is a complete $k$-partite graph or a $3$-uniform complete hypergraph.
The computations rely on the fact that in these cases the set $\T(V_\H)$ and 
the set of $T$-joins, for any given $T\in \T(V_\H)$, can be analysed explicitly, without 
using a notion of hypergraph connectivity.

\begin{definition}
A hypergraph is called a complete $k$-partite hypergraph if 
the vertex set, $V_\H$, is endowed with a $k$-partition, $V_\H=\sqcup_{i=1}^k V_i$, with 
$|V_i|=a_i>0$, such that $E_\H = \set{ \set{v_1,\dots,v_k} : v_i\in V_i, \text{ for every }i=1,\dots,k}$. 
Let us denote a complete $k$-partite hypergraph by $\K^k_{a_1,\dots,a_k}$.
\end{definition}

The next result is the analogue of the well-known result that a bipartite graph contains no cycles of 
odd cardinality. It is also an important characteristic of a complete $k$-partite hypergraph as regards
the results of the previous sections. 

\begin{prop}\label{prop: parity of even sets of complete k-partite hypergraphs}
If $\H = \K^k_{a_1,\dots,a_k}$ and $C\in\E(E_\H)$, then $|C|$ is even. 
\end{prop}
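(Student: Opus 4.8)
The plan is to show directly that an even subset $C$ of the edge set of $\K^k_{a_1,\dots,a_k}$ must have even cardinality, by extracting a linear (or rather mod-$2$) constraint from the degree condition. First I would fix, for each part $V_i$, a distinguished vertex $u_i\in V_i$; such a vertex exists because $a_i>0$. For every edge $e\in E_\H$ and every index $i$, the edge $e$ meets $V_i$ in exactly one vertex, so $e$ determines a function assigning to $i$ the unique vertex of $e\cap V_i$. The key observation is that $\deg_C(u_i) = |\{e\in C : e\cap V_i = \{u_i\}\}|$, and summing over all vertices of $V_i$ gives $\sum_{v\in V_i}\deg_C(v) = |C|$, since each edge of $C$ contributes exactly once to $V_i$.

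The next step is to exploit $C\in\E(E_\H)$, which by Definition~\ref{def: T-join and even subsets} means $\deg_C(v)$ is even for \emph{every} $v\in V_\H$. Then $|C| = \sum_{v\in V_i}\deg_C(v)$ is a sum of even integers, hence even. That is essentially the whole argument: the completeness and $k$-partiteness are used only to guarantee that every edge hits each part exactly once, so that summing the (all even) degrees over a single part recovers $|C|$. I would phrase this cleanly: for $\H = \K^k_{a_1,\dots,a_k}$ and $C\subset E_\H$,
$$
\ts |C| = \sum_{e\in C} |e\cap V_1| = \sum_{v\in V_1}\deg_C(v),
$$
where the first equality holds because $|e\cap V_1|=1$ for every edge $e$, and the second is just reordering the double sum. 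If $C\in\E(E_\H)$, each summand on the right is even, so $|C|$ is even.

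I do not anticipate a serious obstacle here; the only thing to be careful about is making explicit that every edge of a complete $k$-partite hypergraph intersects each part $V_i$ in exactly one vertex, which is immediate from the definition $E_\H=\{\{v_1,\dots,v_k\}:v_i\in V_i\}$ together with $k$-uniformity (the $v_i$ are automatically distinct since they lie in disjoint parts). One could equally well invoke Proposition~\ref{prop: restrictions on T sets}(ii) for odd $k$ and a parity-of-$T$ argument, but the direct counting argument above works uniformly in $k$ and is shorter, so that is the route I would take.
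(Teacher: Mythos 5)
Your argument is correct and is essentially identical to the paper's proof: both compute $|C|=\sum_{v\in V_1}\deg_C(v)$ using the fact that every edge meets $V_1$ in exactly one vertex, and conclude that $|C|$ is a sum of even numbers. (The preliminary discussion of distinguished vertices $u_i$ is unnecessary, but harmless.)
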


\begin{proof}
If $C\subset \E(E_\H)$, then, 
since every edge in $C$ contains a single vertex of $V_1$, 
$$
\ts |C| = \sum_{v\in V_1} \sum_{e\in C} |\set{v}\cap e| = \sum_{v\in V_1} \deg_C(v).
$$
Since $\deg_C(v)$ is even, for every $v\in V_\H$, this implies that $|C|$ is even. 
\end{proof}

\begin{prop}\label{prop: T sets of complete k-partite}
Let $\H = \K^k_{a_1,\dots,a_k}$ and $T\subset V_\H$. $T\in \T(V_\H)$ if and only if
$T_i=T\cap V_i$, for $i=1,\dots, k$, have cardinalities of equal parity. 
Moreover, if $T$ satisfies this condition, then the minimum cardinality of a $T$-join 
is $\max_{i=1}^k |T_i|$.
\end{prop}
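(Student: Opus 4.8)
The plan is to prove the two assertions of Proposition~\ref{prop: T sets of complete k-partite} in turn: first the characterization of $\T(V_\H)$ via the parity condition on the $|T_i|$, and then the formula $\max_i|T_i|$ for the minimum cardinality of a $T$-join.

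For the characterization, the forward implication is immediate: if $J$ is a $T$-join, then for each $i$, since every edge of $\H$ meets $V_i$ in exactly one vertex, we have $|J| = \sum_{v\in V_i}\deg_J(v)$; reducing mod $2$ and noting $\deg_J(v)$ is odd exactly when $v\in T_i$, we get $|J|\equiv_2|T_i|$ for every $i$, so all the $|T_i|$ have the same parity (namely that of $|J|$). This is essentially the computation already used in Proposition~\ref{prop: parity of even sets of complete k-partite hypergraphs}. The substance is the converse, and here I would proceed by explicit construction, which will also hand us the second half of the proposition. Assume the $|T_i|$ all have the same parity; set $m = \max_i|T_i|$. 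The idea is to enlarge each $T_i$ to a set $\widetilde T_i\subseteq V_i$ of size exactly $m$ if possible — this is where a side hypothesis is silently needed, so I would first treat the generic case and then address the boundary. Concretely: within each $V_i$, list the "to-be-made-odd" vertices; one wants to build $m$ edges $e_1,\dots,e_m$ (the $T$-join $J$) so that, for each $i$, each vertex of $\widetilde T_i$ lies in an odd number of the $e_j$ and each other vertex of $V_i$ in an even number. The cleanest device is to pick, in each $V_i$, an enumeration where the first $|T_i|$ vertices are those of $T_i$, then form $e_j = \{v^{(1)}_{r_1(j)},\dots,v^{(k)}_{r_k(j)}\}$ for a suitable choice of indices $r_i(j)\in\{1,\dots,a_i\}$ as $j$ runs over $1,\dots,m$, arranging that vertex $v^{(i)}_p$ appears in an odd number of the $e_j$ iff $p\le|T_i|$; then check the $e_j$ are distinct so that $J=\{e_1,\dots,e_m\}$ genuinely has $m$ edges. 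This produces a $T$-join of size $m$, proving both that $T\in\T(V_\H)$ and that $m$ is achievable.

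For the lower bound in the minimum-cardinality statement, I would argue that \emph{any} $T$-join $J'$ satisfies $|J'|\ge|T_i|$ for each $i$, hence $|J'|\ge\max_i|T_i| = m$. This is because, fixing $i$, the $|T_i|$ vertices of $T_i$ each have odd (so nonzero) $J'$-degree, and since distinct vertices of $V_i$ lie in disjoint sets of edges (each edge contains exactly one $V_i$-vertex), we need at least one edge per vertex of $T_i$: formally, $|J'| = \sum_{v\in V_i}\deg_{J'}(v)\ge\sum_{v\in T_i}\deg_{J'}(v)\ge|T_i|$. Combined with the construction giving a $T$-join of size exactly $m$, this yields that the minimum cardinality of a $T$-join is $\max_i|T_i|$. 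Note this also re-derives Proposition~\ref{prop: restrictions on T sets}(i)--(ii) in this special case, since $|J'|\equiv_2|T_i|$ for all $i$.

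The main obstacle is the combinatorial construction of the $T$-join of size exactly $m$, and in particular a degenerate case: when some $|T_i| = a_i = m$ (that part $V_i$ is entirely "odd") but the parity of $m$ forces us to want each vertex of $V_i$ to appear an odd number of times — this is consistent — whereas if $|T_j| < m$ for some $j$ with $a_j$ small, one must be careful that there are enough vertices in $V_j$ to absorb the "even-degree padding" without accidentally creating a repeated edge. I expect to handle this by choosing the index functions $r_i(j)$ so that consecutive edges differ in at least one coordinate (e.g. cycle through the relevant vertices of one fixed part), which guarantees distinctness; the parity bookkeeping then reduces to the observation that a vertex used in a contiguous block of edges of even length contributes even degree and of odd length contributes odd degree, and any vertex not used at all contributes zero. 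Assembling these blocks so that exactly the vertices of each $T_i$ get odd total multiplicity, across all $m$ edges simultaneously, is the one place requiring genuine care, but it is a finite scheduling problem that the hypothesis $m=\max_i|T_i|$ together with the equal-parity condition makes solvable.
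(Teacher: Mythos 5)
Your overall architecture is the same as the paper's: necessity of the equal-parity condition and the lower bound $|J'|\ge\max_i|T_i|$ both come from the identity $|J'|=\sum_{v\in V_i}\deg_{J'}(v)$ (every edge meets $V_i$ in exactly one vertex), and sufficiency together with the matching upper bound is to be obtained by exhibiting a $T$-join of cardinality $m=\max_i|T_i|$. The gap is that this construction --- the real content of the proposition --- is never actually carried out: you end by calling it ``a finite scheduling problem \dots solvable'' without solving it, and the two concrete devices you float are both defective. Enlarging each $T_i$ to a set $\widetilde T_i\subseteq V_i$ of cardinality $m$ and giving every vertex of $\widetilde T_i$ odd degree fails twice over: it is impossible when $a_i<m$ (which the parity hypothesis does not rule out), and even when possible it yields a $\widetilde T$-join for the strictly larger set $\widetilde T=\bigcup_i\widetilde T_i$, not a $T$-join. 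And requiring only that \emph{consecutive} edges differ in some coordinate does not make the $m$ edges pairwise distinct, so your $J$ could have fewer than $m$ elements and hence the wrong degrees.

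The correct device, which is what the paper uses, is to pad with \emph{repetitions} rather than with new vertices. For each $i$ form a sequence $w^i_1,\dots,w^i_m$ whose first $|T_i|$ terms are the distinct elements of $T_i$ and whose remaining $m-|T_i|$ terms all equal $w^i_1$ (any vertex of $V_i$ if $T_i=\emptyset$), and put $e_j=\{w^1_j,\dots,w^k_j\}$ for $j=1,\dots,m$. Then $\deg_J(w^i_1)=m-|T_i|+1$ is odd exactly because $|T_i|\equiv_2 m$ --- this is the one place the equal-parity hypothesis is used in the construction --- the remaining vertices of $T_i$ have degree $1$, and vertices outside $T$ have degree $0$ (or the even degree $m$ in the degenerate case $T_i=\emptyset$). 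Distinctness of the $e_j$ is automatic: some part $i_0$ attains $|T_{i_0}|=m$, so the $i_0$-coordinates $w^{i_0}_1,\dots,w^{i_0}_m$ are already pairwise distinct vertices. With this construction supplied, the rest of your argument goes through as written.
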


\begin{proof}
 If $T\in \T(V_\H)$ and if $J$ is a $T$-join then,
for every $i=1,\dots,k$,
\begin{equation}\label{eq: L1034}
\ts |J| = \sum_{v\in T_i} \deg_J(v) + \sum_{v\in V_i\setminus T_i} \deg_J(v) \equiv_2 |T_i|.
\end{equation}
Hence $|T_i|$, for $i=1,\dots,k$, have equal parity. 
Conversely, let $T\subset V_\H$ be such that $T_i=T\cap V_i$, for $i=1,\dots,k$, have cardinalities 
of equal parity. Denote $r=\max_{i=1}^k |T_i|$. Consider the sequence of elements of  
$T_i$ written as $w_1^i,\dots,w_r^i$, where, the first $|T_i|$ in this sequence 
are the members of $T_i$ (without repetitions) and, if $|T_i|<r$,  
the last $r-|T_i|$ of them are equal to $w^i_{1}$. 
Consider the set of $r$ edges 
$$
J = \set{\set{w^1_i,\dots,w^k_i} : i=1,\dots,r}.
$$
If $v\not \in T$ then, clearly $\deg_J(v)=0$. If $v=w^i_j\in T_i$, with $2\leq j\leq |T_i|$ then 
$\deg_J(v)=1$. If $v=w^i_1\in T_i$ then, because 
$|T_i|\equiv_2 r$, $\deg_J(v) = r-|T_i|+1 \equiv_2 1$. 
We deduce that $J$ is a $T$-join of cardinality $r=\max_{i=1}^k |T_i|$.
It remains to be proved that this is the minimum cardinality of a $T$-join.
If $J'$ any $T$-join, then, applying the equality in \eqref{eq: L1034} to $J'$,
we get 
$$
\ts |J'|\geq  \sum_{v\in T_i} \deg_{J'}(v) \geq |T_i|,  
$$
for every $i=1,\dots,k$. We conclude that $|J'|\geq \max_{i=1}^k |T_i|$.
\end{proof}

\begin{theorem}
If $\H = \K^k_{a_1,\dots,a_k}$, then 
$$
\renewcommand{\arraystretch}{1.4}
\begin{cases}
\log_2 \bigl(\deg K[E_\H]/I_H\bigr) = \bigl(\sum_{i=1}^k a_i\bigr)-k \\ 
\reg K[E_\H]/I_\H = \max\set{a_1,\dots,a_k}-1.
\end{cases}
$$
\end{theorem}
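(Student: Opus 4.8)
The plan is to combine the two formulas from the previous sections—Theorem~\ref{thm: regularity} giving $\reg K[E_\H]/I_\H = \mup(\H)-1$ and Theorem~\ref{thm: degree} giving the degree in terms of $|\T(V_\H)|$—with the explicit description of $\T(V_\H)$ and the minimum cardinality of a $T$-join for $\H = \K^k_{a_1,\dots,a_k}$ obtained in Propositions~\ref{prop: parity of even sets of complete k-partite hypergraphs} and \ref{prop: T sets of complete k-partite}. Since, by Proposition~\ref{prop: parity of even sets of complete k-partite hypergraphs}, no element of $\E(E_\H)$ has odd cardinality, Theorem~\ref{thm: degree} tells us that $\deg K[E_\H]/I_\H = \tfrac12 |\T(V_\H)|$, so the degree formula reduces to counting $\T(V_\H)$; and Proposition~\ref{prop: parity joins as minimum cardinality fixed parity T-joins}(ii) together with Proposition~\ref{prop: T sets of complete k-partite} identifies $\mup(\H)$ as the maximum over $T\in\T(V_\H)$ of $\max_i |T_i|$.

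\textbf{Step 1: the regularity.} By Theorem~\ref{thm: regularity} and Corollary~\ref{cor: mup as maximum of cardinalities of reduced parity joins}, $\reg K[E_\H]/I_\H = \mup(\H)-1$, and $\mup(\H)$ is the largest cardinality of a parity join. By Proposition~\ref{prop: parity joins as minimum cardinality fixed parity T-joins}(ii), a parity join is a minimum-cardinality $T$-join within its parity class, so its cardinality is the minimum $T$-join size, which by Proposition~\ref{prop: T sets of complete k-partite} equals $\max_{i=1}^k |T_i|$. Maximising over admissible $T$: one can take $T$ to consist of all of $V_i$ for the index $i$ with $a_i$ maximal and, say, empty intersection with the others — but the parity condition of Proposition~\ref{prop: T sets of complete k-partite} requires the $|T_i|$ to have equal parity, so if some $a_j$ has the wrong parity one adjusts $|T_j|$ down by one (still $\le a_i$). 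Either way the supremum of $\max_i |T_i|$ over admissible $T$ equals $\max\{a_1,\dots,a_k\}$, giving $\reg = \max\{a_1,\dots,a_k\}-1$. I should double-check the edge cases where all $a_i$ equal $1$ (so $\H$ is a single edge and $\reg = 0$).

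\textbf{Step 2: the degree, i.e.\ counting $\T(V_\H)$.} A subset $T\subset V_\H$ lies in $\T(V_\H)$ iff the numbers $|T\cap V_i|$, $i=1,\dots,k$, all have the same parity. Split the count by the common parity bit $\epsilon\in\{0,1\}$: the number of subsets $S\subset V_i$ with $|S|\equiv_2 \epsilon$ is $2^{a_i-1}$ (valid since each $a_i\ge 1$), so the number of $T$ with all parities $\equiv_2 \epsilon$ is $\prod_{i=1}^k 2^{a_i-1} = 2^{(\sum a_i)-k}$, independent of $\epsilon$. These two families are disjoint \emph{except} for the overlap; in fact for $\epsilon=0$ and $\epsilon=1$ the families are genuinely disjoint unless... no, they are always disjoint as sets of subsets indexed by which parity each coordinate has, but a given $T$ determines its parities uniquely — wait, the two events ``all $|T_i|$ even'' and ``all $|T_i|$ odd'' are mutually exclusive, so $|\T(V_\H)| = 2\cdot 2^{(\sum a_i)-k} = 2^{(\sum a_i)-k+1}$. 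Hence $\deg K[E_\H]/I_\H = \tfrac12|\T(V_\H)| = 2^{(\sum a_i)-k}$, and taking $\log_2$ gives $(\sum_{i=1}^k a_i)-k$, as claimed.

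\textbf{The main obstacle} I anticipate is not any single deep step but rather the careful bookkeeping in Step 1: verifying that the parity constraint in Proposition~\ref{prop: T sets of complete k-partite} never forces $\max_i|T_i|$ strictly below $\max_i a_i$ (one must exhibit a concrete admissible $T$ attaining the bound, handling separately the cases where the maximal $a_i$ shares the parity of some other $a_j$ and where it does not), and making sure the degenerate case of all parts having size $1$ is consistent with both formulas. The counting in Step 2 is routine once one observes that each $a_i\ge 1$ makes the half-sum identity $\sum_{j\,\equiv_2\,\epsilon}\binom{a_i}{j}=2^{a_i-1}$ valid.
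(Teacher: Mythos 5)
Your proposal is correct and follows essentially the same route as the paper: regularity via $\reg = \mup(\H)-1$ together with the identification of parity joins with minimum-cardinality $T$-joins and the value $\max_i|T_i|$ from Proposition~\ref{prop: T sets of complete k-partite}, and degree via $\tfrac12|\T(V_\H)|$ with the parity count $|\T(V_\H)| = 2^{(\sum_i a_i)-k+1}$ (which you carry out explicitly where the paper calls it straightforward). The only blemish is the phrase ``adjusts $|T_j|$ down by one'': starting from $T_j=\emptyset$ one must add a vertex to each other part when $\max_i a_i$ is odd, exactly as in the paper's construction $T=\set{v_1,\dots,v_{m-1}}\cup V_m\cup\set{v_{m+1},\dots,v_k}$, but the intended parity adjustment is clear and the bound $\max_i|T_i|\le\max_i a_i$ still holds.
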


\begin{proof}
As for the degree, using Theorem~\ref{thm: degree}, we only need to show that 
\begin{equation}\label{eq: L1054}
\ts \log_2|\T(V_\H)| = (\sum_{i=1}^k a_i)-k+1.
\end{equation}
But this is now straightforward from Proposition~\ref{prop: T sets of complete k-partite}.
Let us now deal with the statement on the regularity of $K[E_\H]/I_\H$.
By Theorem~\ref{thm: regularity} we must show that $\mup(\H) = \max\set{a_1,\dots,a_k}$. 
Let $J\subset E_\H$ be a parity join and let $T\in \T(V_\H)$ be the set of vertices, $v\in V_\H$,
such that $\deg_J(v)$ is odd. Since by Proposition~\ref{prop: parity of even sets of complete k-partite hypergraphs}
there are no elements of $\E(E_\H)$ of odd cardinality, using 
Proposition~\ref{prop: parity joins as minimum cardinality fixed parity T-joins},
we deduce that $|J|$ is the minimum cardinality of a $T$-join. Since we showed 
in Proposition~\ref{prop: T sets of complete k-partite} that the minimum cardinality 
of a $T$-join is $\max_{i=1}^k |T_i|$, where, recall, $T_i=T\cap V_i$, we conclude 
that $|J|\leq \max\set{a_1,\dots,a_k}$. This shows that 
$$
\mup(\H)\leq \max \set{a_1,\dots,a_k}.
$$
To prove the opposite inequality, it suffices to show that there exists a parity join of 
cardinality $\max \set{a_1,\dots,a_k}$. Arguing as before, this is the same as showing that 
there exists $T\in \T(V_\H)$ such that $\max\set{a_1,\dots,a_k} = \max_{i=1}^k |T_i|$. 
Let $1\leq m\leq k$ be such that $a_m = \max \set{a_1,\dots,a_k}$. If $a_m$ is even, fix  
$T = V_m$, if $a_m$ is odd, then, choosing $v_i\in V_i$, for all $i\not = m$, set 
$T = \set{v_1,\dots,v_{m-1}}\cup V_m \cup \set{v_{m+1},\dots,v_k}$. In both cases 
$T\in \T(V_\H)$ and $a_m = \max_{i=1}^k |T_i|$.
\end{proof}

\begin{definition}
A hypergraph is called complete of rank $k$ if $|V_\H|\geq k$ and 
$E_\H$ is the set of cardinality $k$ subsets of $V_\H$. Let us denote a complete 
hypergraph of rank $k$, with $n$ vertices, by $\K^k_{n}$.
\end{definition}

Recall that we are assuming throughout this article that $k\geq 2$.

\begin{prop}\label{prop: T sets of complete hypergraphs}
Let $\H=\K^k_n$. (i) If $n=k$ then $\T(V_\H) = \set{\emptyset,V_\H}$. 
(ii) If $n>k$ and $k$ is odd, then $\T(V_\H) = \P(V_\H)$.
(iii) If $n>k$ and $k$ is even, then $\T(V_\H)$ is equal to the set of elements of $\P(V_\H)$
of even cardinality. 
\end{prop}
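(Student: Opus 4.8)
The plan is to analyze, for each case, which subsets $T\subset V_\H$ admit a $T$-join, relying on Proposition~\ref{prop: restrictions on T sets} for the necessary conditions and on explicit constructions for sufficiency. For part (i), when $n=k$ there is exactly one edge, $E_\H=\{V_\H\}$, so the only subsets of edges are $\emptyset$ and $\{V_\H\}$; computing $\deg_J(v)$ in each case gives $\T(V_\H)=\{\emptyset,V_\H\}$ directly. For parts (ii) and (iii), the necessary conditions come from Proposition~\ref{prop: restrictions on T sets}: if $k$ is even then $|T|$ must be even, which already gives the inclusion $\subseteq$ in (iii), and in case (ii) there is no parity restriction coming from that proposition, so the content of (ii) is entirely the construction. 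So the real work is: for $n>k$, construct a $T$-join for every $T$ satisfying the parity constraint (all $T$ when $k$ odd; all even $T$ when $k$ even).

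The key step is the construction. First I would reduce to building $\{v\}$-joins and $\{v,w\}$-joins (for $k$ odd) or $\{v,w\}$-joins (for $k$ even), since the symmetric-difference property recalled before Section~\ref{sec: Grobner basis} lets one assemble an arbitrary $T$-join from these: writing $T=\{v_1,\dots,v_m\}$, if $k$ is odd take $J=J_{v_1}\sd\cdots\sd J_{v_m}$ where $J_{v_i}$ is a $\{v_i\}$-join; if $k$ is even, pair up the vertices of $T$ (possible since $|T|$ is even) and take the symmetric difference of $\{v,w\}$-joins over the pairs. For a $\{v,w\}$-join with $v\neq w$: since $n>k\geq 2$, pick any $(k-2)$-subset $S\subset V_\H\setminus\{v,w\}$ and set $J=\{S\cup\{v\},S\cup\{w\}\}$ (using $n-2\geq k-2$); then $\deg_J(u)$ is $1$ for $u\in\{v,w\}$, $2$ for $u\in S$, $0$ otherwise — odd exactly on $\{v,w\}$. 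For a $\{v\}$-join when $k$ is odd: I would take a single edge containing $v$, say $J=\{e\}$ with $v\in e$, $|e|=k$; then $\deg_J(u)=1$ for $u\in e$, so this is an $e$-join, not a $\{v\}$-join. To fix this, combine it with $\{v',v''\}$-joins to cancel the unwanted vertices of $e\setminus\{v\}$: since $k-1$ is even, pair up the $k-1$ vertices of $e\setminus\{v\}$ and symmetric-difference in the corresponding $\{v',v''\}$-joins, each built as above using the ambient room $n>k$. The result is a $\{v\}$-join.

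The main obstacle is case (ii): producing a $\{v\}$-join (equivalently, realizing an odd-cardinality $T$) when $k$ is odd, because a single edge is an "$e$-join" with $|e|=k$ odd rather than a singleton join, so one genuinely needs $n>k$ to have spare vertices to cancel the extra $k-1$ vertices — this is exactly where the hypothesis $n>k$ is used, and it is worth checking the bookkeeping that all the auxiliary $(k-2)$-subsets can be chosen inside $V_\H\setminus\{v,w\}$, which has size $n-2\geq k-1>k-2$. Everything else is routine: the parity obstructions from Proposition~\ref{prop: restrictions on T sets} handle the converse inclusions, and the symmetric-difference identity handles the assembly.
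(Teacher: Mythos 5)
Your overall strategy is sound and close in spirit to the paper's: the forward inclusions come from Proposition~\ref{prop: restrictions on T sets} (plus the one-edge observation for $n=k$), and the converse is reduced to exhibiting a $\set{v}$-join when $k$ is odd, respectively a $\set{v_1,v_2}$-join when $k$ is even, the general $T$ being assembled by symmetric differences. The paper's basic joins are ``stars'': for $k$ odd it fixes a $k$-set $V\subset V_\H\setminus\set{v_0}$ and takes $J$ to be all $k$ edges $\set{v_0}\cup J'$ with $J'\subset V$, $|J'|=k-1$, so that $\deg_J(v_0)=k$ is odd and $\deg_J(w)=k-1$ is even on $V$; for $k$ even it does the analogous thing around a pair. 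Your two-edge pair-join and the explicit cancellation of $e\setminus\set{v}$ by $\tfrac{k-1}{2}$ pair-joins is a legitimate, somewhat more economical alternative.

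There is, however, a concrete error in your central construction. With $|S|=k-2$ the sets $S\cup\set{v}$ and $S\cup\set{w}$ have cardinality $k-1$, so they are \emph{not} edges of the $k$-uniform hypergraph $\K^k_n$; you need $|S|=k-1$. With that correction the required inequality becomes $n-2\geq k-1$, i.e.\ $n>k$, so --- contrary to your closing remark --- the hypothesis $n>k$ is already indispensable for the pair-joins, not only for the singleton case (indeed, for $n=k$ there is a single edge and no two-edge subset of $E_\H$ exists at all). Once this off-by-one is fixed, the rest of your bookkeeping (disjointness of the pairs, the identity $\deg_{J_1\sd J_2}(v)\equiv_2\deg_{J_1}(v)+\deg_{J_2}(v)$ recalled at the end of Section~\ref{sec: T-joins}, and the parity obstructions) goes through and the argument is complete.
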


\begin{proof}
(i) If $n=k$ and $V_\H= \set{v_1,\dots,v_n}$ then $E_\H = \set{\set{v_1,\dots,v_n}}$
and therefore a $T$-join exists if and only if $T=\emptyset$ or $T=\set{v_1,\dots,v_n}=V_\H$.
(ii) Assume that $n>k$ and $k$ is odd. To show that $\T(V_\H) = \P(V_\H)$ 
it suffices to show that $\set{v} \in \T(V_\H)$, for all $v\in V_\H$.
Fix $v_0\in V_\H$ and let $V$ be any subset of $V_\H\setminus \set{v_0}$ with cardinality  
$k$. Consider:
$$
J = \set{\set{v_0}\cup J' : J'\subset V\text{ and }|J'|=k-1}\subset E_\H.
$$
As $\deg_J(v_0)=\binom{k}{k-1}=k$ is odd, 
$\deg_J(w) = \binom{k-1}{k-2} = k-1$ is even, for every $w\in V$, and 
$\deg_J(v) = 0$ for every $v\not \in V\cup \set{v_0}$, we deduce that 
$$
\deg_J(v) \text{ is odd} \iff v\in \set{v_0}.
$$
In other words, $J$ is $\set{v_0}$-join. (iii) Assume that $n>k$ and $k$ is even. Let $T\in \T(V_\H)$ and let $J$ be a $T$-join. 
By Proposition~\ref{prop: restrictions on T sets}, $|T|$ is even. To prove that 
$\T(V_\H)$ is equal to the set of subsets of $V_\H$ of even cardinality, it suffices 
to show that, for every pair of distinct vertices $v_1,v_2\in V_\H$, 
$\set{v_1,v_2}\in \T(V_\H)$. Let us fix $v_1,v_2\in V_\H$. 
Choose $V\subset V_\H\setminus \set{v_1,v_2}$ of
cardinality $k-1$. Consider 
$$
J = \set{\set{u,v}\cup J' : J'\subset V\text{ and }|J'|=k-2}\subset E_\H.
$$
Arguing as before we deduce that $J$ is a $\set{v_1,v_2}$-join.
\end{proof}

\begin{cor}\label{cor: degree of complete hypergraph}
Let $\H=\K_n^k$. (i) If $n=k$, then $\deg K[E_\H]/I_H$ is equal to $1$. 
(ii) If $n>k$, then $\deg K[E_\H]/I_H$ is equal to $2^{n-1}$.
\end{cor}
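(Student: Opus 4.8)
The plan is to read off the degree from Theorem~\ref{thm: degree}, for which the only genuine input is the description of $\T(V_\H)$ given in Proposition~\ref{prop: T sets of complete hypergraphs}, together with a decision, in each case, as to whether $\E(E_\H)$ contains a set of odd cardinality: that dichotomy is exactly what selects between the two formulas $\frac12|\T(V_\H)|$ and $|\T(V_\H)|$ in Theorem~\ref{thm: degree}.

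For part~(i) I would observe that when $n=k$ the hypergraph has a single edge, namely $V_\H$ itself; the subset $\set{V_\H}$ is not even (each vertex has $\deg$ equal to $1$), so $\E(E_\H)=\set{\emptyset}$ and in particular contains no set of odd cardinality. Hence Theorem~\ref{thm: degree} gives $\deg K[E_\H]/I_\H=\frac12|\T(V_\H)|$, and since $\T(V_\H)=\set{\emptyset,V_\H}$ by Proposition~\ref{prop: T sets of complete hypergraphs}(i), the degree is $1$.

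For part~(ii) I would split on the parity of $k$. If $k$ is odd then Proposition~\ref{prop: restrictions on T sets} tells us directly that $\E(E_\H)$ has no element of odd cardinality, so the degree equals $\frac12|\T(V_\H)|$; combined with $\T(V_\H)=\P(V_\H)$ from Proposition~\ref{prop: T sets of complete hypergraphs}(ii) this gives $\frac12\cdot2^{n}=2^{n-1}$. If $k$ is even I first produce an even set of odd cardinality: using $n>k$, pick a $(k+1)$-element subset $W\subset V_\H$ and let $J$ be the set of all $k$-element subsets of $W$, regarded as edges of $\H$. Then $|J|=k+1$ is odd, every vertex of $W$ lies in exactly $k$ of these edges and every vertex outside $W$ in none, so $\deg_J(v)$ is even for all $v$ because $k$ is even, i.e.\ $J\in\E(E_\H)$ with $|J|$ odd. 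Theorem~\ref{thm: degree} then yields $\deg K[E_\H]/I_\H=|\T(V_\H)|$, and by Proposition~\ref{prop: T sets of complete hypergraphs}(iii) the set $\T(V_\H)$ consists precisely of the even-cardinality subsets of $V_\H$, of which there are $2^{n-1}$.

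The only step that involves any construction rather than bookkeeping is exhibiting the odd-cardinality even set in the even-$k$ case, and I do not anticipate real difficulty there, since the requirement $n>k$ makes the $(k+1)$-clique available; in fact it is the unavailability of this construction when $n=k$ (a single edge) that is responsible for part~(i) breaking the $2^{n-1}$ pattern.
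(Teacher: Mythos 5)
Your proof is correct and follows essentially the same route as the paper: for $n>k$ it splits on the parity of $k$, invoking Proposition~\ref{prop: restrictions on T sets} when $k$ is odd and constructing the odd-cardinality even set from a $(k+1)$-element vertex subset when $k$ is even, then reads off the degree from Theorem~\ref{thm: degree} and Proposition~\ref{prop: T sets of complete hypergraphs}. The only (harmless) difference is in part (i), where the paper simply observes that $|E_\H|=1$ forces $I_\H=(0)$, whereas you run the same Theorem~\ref{thm: degree} machinery with $\T(V_\H)=\set{\emptyset,V_\H}$; both give degree $1$.
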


\begin{proof}
(i) If $n=k$ then $|E_H|=1$ hence $I_\H = (0)$ and $K[E_\H]$ is $1$-dimensional. 
(ii) There are two cases. Assume first that $k$ is odd. Then, by Proposition~\ref{prop: restrictions on T sets},
no element of $\E(E_\H)$ has odd cardinality. Therefore, by Theorem~\ref{thm: degree},
the degree of $K[E_\H]/I_\H$ is equal to $\frac{|\T(V_\H)|}{2}$, which, by Proposition~\ref{prop: T sets of complete hypergraphs},
is equal to $2^{n-1}$. Let us now assume that $k$ is even. Let $V\subset V_\H$ be a subset of 
vertices of cardinality $k+1$ and 
let $J$ be the set of edges with vertices in $V$. Then 
$$
\ts \deg_J(v) = \binom{k}{k-1} = k 
$$
is even, for every $v\in V$ while $\deg_J(v)=0$, for every $v\in V_\H \setminus V$. 
In other words, $J$ is an even subset of edges. Since $|J|=\binom{k+1}{k} = k+1$ we conclude that 
$\E(E_\H)$ contains elements of odd cardinality. Hence the degree of $K[E_\H]/I_\H$ is 
$|\T(E_\H)|$, which, by Proposition~\ref{prop: T sets of complete hypergraphs}, is equal to 
$2^{n-1}$.
\end{proof}

\begin{theorem}
Let $\H=\K_n^3$. If $n=3$, then $\reg K[E_\H]/I_\H = 0$. 
If $n=4$, then $\reg K[E_\H]/I_\H = 4$. 
If $n\geq 5$, then $\reg K[E_\H]/I_\H = \lfloor\frac{n+1}{3}\rfloor $.
\end{theorem}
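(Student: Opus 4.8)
The plan is to compute the invariant $\mup(\K_n^3)$ and invoke Theorem~\ref{thm: regularity}, which gives $\reg K[E_\H]/I_\H=\mup(\H)-1$. Since $k=3$ is odd, Proposition~\ref{prop: restrictions on T sets} shows that no element of $\E(E_\H)$ has odd cardinality; hence, by Proposition~\ref{prop: parity joins as minimum cardinality fixed parity T-joins}(i), for a fixed $T\in\T(V_\H)$ all $T$-joins have cardinality of one and the same parity (that of $|T|$), and part~(ii) identifies the parity joins with the minimum-cardinality $T$-joins, as $T$ runs over $\T(V_\H)$. By Proposition~\ref{prop: T sets of complete hypergraphs}, $\T(V_\H)=\P(V_\H)$ for $n>3$ and $\T(V_\H)=\set{\emptyset,V_\H}$ for $n=3$; and since $\K_n^3$ is invariant under all permutations of its vertices, the minimum cardinality $\tau_m$ of a $T$-join depends only on $m=|T|$. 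Thus $\mup(\K_3^3)=\max\set{\tau_0,\tau_3}=1$ (the single edge is a $V_\H$-join), giving $\reg=0$, while $\mup(\K_n^3)=\max_{0\le m\le n}\tau_m$ for $n\ge4$. The case $n=4$ is then immediate: $\K_4^3$ has $\binom{4}{3}=4$ edges, any two of which meet in a pair (producing a vertex of odd degree), while the full edge set has every vertex of degree $3$; so the only even subset of edges is $\emptyset$, the parity-join condition of Definition~\ref{def: parity join}(i) is vacuous, every subset of the edge set is a parity join, $\mup(\K_4^3)=4$, and Theorem~\ref{thm: regularity} yields the regularity in this case.

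The core of the argument is the computation of $\tau_m$ for $n\ge5$. For the lower bound: if $J$ is a $T$-join with $|T|=m$ then $3|J|=\sum_{v\in V_\H}\deg_J(v)\ge\sum_{v\in T}\deg_J(v)\ge m$, so $|J|\ge\lceil m/3\rceil$, and together with $|J|\equiv m\pmod2$ (Proposition~\ref{prop: restrictions on T sets}) this forces $\tau_m\ge\lceil m/3\rceil$, raised by $1$ when $\lceil m/3\rceil\not\equiv m\pmod2$; moreover a $\set{v}$-join can be neither empty nor a single edge (a single edge is a $T$-join with $|T|=3$), so $\tau_1\ge3$. For the matching upper bound I would exhibit explicit $T$-joins inside $\K_n^3$: $q$ pairwise disjoint edges when $m=3q$; $q-1$ pairwise disjoint edges together with $\set{\set{a,b,x_1},\set{a,b,x_2},\set{a,b,x_3}}$ when $m=3q+2$, $q\ge1$ (and $\set{\set{a,b,c},\set{a,b,d}}$ when $m=2$); $q-2$ pairwise disjoint edges together with $\set{\set{a,x_1,x_2},\set{a,x_3,x_4},\set{a,x_5,x_6}}$ when $m=3q+1$, $q\ge2$ (and $\set{\set{a,b,c},\set{a,d,e}}$ when $m=4$); and $\set{\set{v,a,b},\set{v,b,c},\set{v,a,c}}$ when $m=1$. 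A short degree count verifies that each realizes its prescribed $T$, and each uses at most $\max\set{m,5}\le n$ vertices, so it indeed lies inside $\K_n^3$. Hence $\tau_m$ equals the lower bound for every $m\ne1$, while $\tau_1=3$.

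It remains to maximize over $m$. The above gives $\tau_{3q}=q$ and $\tau_{3q+2}=q+2$ for all $q\ge0$, $\tau_{3q+1}=q+1$ for $q\ge1$, and $\tau_1=3$; so the maximum of $\tau_m$ over $0\le m\le n$ is attained at the largest $m\le n$ with $m\equiv2\pmod3$, with value $\lfloor(n-2)/3\rfloor+2$, which for $n\ge5$ is at least $3=\tau_1$ and dominates every other $\tau_m$. Therefore $\mup(\K_n^3)=\lfloor(n-2)/3\rfloor+2$, and by Theorem~\ref{thm: regularity},
\[
\reg K[E_\H]/I_\H=\Bigl\lfloor\tfrac{n-2}{3}\Bigr\rfloor+1=\Bigl\lfloor\tfrac{n+1}{3}\Bigr\rfloor ,
\]
for $n\ge5$. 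The main obstacle is the core step --- determining the minimal $T$-join cardinalities exactly, in particular the small-cardinality anomaly $\tau_1=3$, together with the bookkeeping needed to confirm that the extremal gadgets fit inside the $n$ available vertices precisely when $n\ge5$, which is also why $n=3$ and $n=4$ fall outside the general formula.
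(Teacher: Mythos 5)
Your proposal is correct and follows essentially the same route as the paper: both reduce the regularity to $\mup(\H)-1$ via Theorem~\ref{thm: regularity}, identify parity joins with minimum-cardinality $T$-joins (using that $\E(E_\H)$ has no odd-cardinality elements for $k=3$), obtain upper bounds from explicit small $T$-joins, and get the matching lower bound from the identity $\sum_v\deg_J(v)=3|J|$ combined with the parity constraint $|J|\equiv_2|T|$. The only organizational difference is that you compute the minimum $T$-join cardinality $\tau_m$ exactly in each residue class of $m$ modulo $3$ and then maximize, whereas the paper proves the upper bound by induction on $|T|$ in steps of three (with base cases $|T|\leq 4$) and applies the counting/parity lower bound only to one extremal $T$ with $|T|=3\lfloor\frac{n+1}{3}\rfloor-1$. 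One remark: for $n=4$ your computation gives $\mup(\K_4^3)=4$ and hence regularity $3$, which agrees with the paper's own proof of that case, so the value ``$4$'' in the statement of the theorem appears to be a typo.
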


\begin{proof}
If $n=3$ then $|E_\H|=1$ and therefore, by Theorem~\ref{thm: Grobner basis}, $I_\H = (0)$, 
and hence $\reg K[E_\H]/I_\H = 0$. Assume now that $n=4$ and let $C\in \E(E_\H)$. 
By Proposition~\ref{prop: restrictions on T sets}, $|C|$ must be even and hence 
$|C|=0$, $2$ or $4$. A set of two distinct edges yields two vertices of degree
$2$ and two of degree $1$. A set of four edges is the whole of $E_\H$,
which yields all vertices of degree $3$. Hence we must have $|C|=0$. In other words,
$\E(E_\H) = \set{\emptyset}$ and, consequently, all subsets of $E_\H$ are parity joins. 
Therefore $\mup(\H) = |E_\H|=4$ and, by Theorem~\ref{thm: regularity},  
$\reg K[E_\H]/I_\H = 3$.
\medskip 

\noindent
Let us now assume that $n\geq 5$. As no element of $\E(E_\H)$ has odd cardinality, 
by Proposition~\ref{prop: parity joins as minimum cardinality fixed parity T-joins}, parity 
joins coincide with minimum cardinality $T$-joins. We will use this to show 
that $\mup(\H) = \lfloor\frac{n+1}{3}\rfloor +1$. Note that, by Proposition~\ref{prop: T sets of complete hypergraphs},
$\T(V_\H)=\P(V_\H)$. Given $T\subset V_\H$ let us denote by $\tau(\H,T)$ the minimum cardinality 
of a $T$-join. If $T=\emptyset$ then $\tau(\H,T) = 0$. If $|T|=1$, say $T=\set{v_1}$, then 
$\tau(\H,T)\leq 3$ since there exists a $T$-join of cardinality $3$, as is shown in Figure~\ref{fig: T joins in a complete graph}.
\begin{figure}[h]

\begin{tabular}{|c|c|}
\hline 

\begin{tikzpicture} [scale=1.4]
\clip (-1.2,-2.25) rectangle (1.4,.35);

\def\A{(-.75,-1.1)} 
\def\B{(.2,-1.5)} 
\def\C{(.9,-1)} 
\def\D{(.2,-.1)} 

\fill[green] \A -- \C -- \D -- \A;
\fill[blue] \A -- \B -- \C -- \A;
\draw[line width = .7pt] \A -- \C node[anchor = west]{$\ss v_3$};
\fill[yellow] \A -- \B -- \D -- \A;
\draw[line width = .7pt] \A -- \D node[anchor = south]{$\ss v_4$};
\draw[line width = .7pt] \A node[anchor = east]{$\ss v_1$} -- \B node[anchor = north]{$\ss v_2$};
\draw[line width = .7pt] \A -- \B;
\draw[line width = .7pt] \B -- \C;
\draw[line width = .7pt] \C -- \D;
\draw[line width = .7pt] \D -- \B;
\draw[line width = .5pt, dashed] \A -- \C;

\fill\A circle (1.5pt);
\fill[gray]\B circle (1.5pt);
\fill[gray]\C circle (1.5pt);
\fill[gray]\D circle (1.5pt);

\draw(1.1,-2.15) node {$\ss |T|=1$};
\end{tikzpicture}

&

\begin{tikzpicture} [scale=1.4]
\clip (-1.2,-2.25) rectangle (1.4,.35);

\def\A{(-.75,-1.1)} 
\def\B{(.2,-1.5)} 
\def\C{(.9,-1)} 
\def\D{(.2,-.1)} 

\fill[blue] \A -- \B -- \C -- \A;
\draw[line width = .7pt] \A -- \C node[anchor = west]{$\ss v_2$};
\fill[yellow] \A -- \B -- \D -- \A;
\draw[line width = .7pt] \A -- \D node[anchor = south]{$\ss v_1$};
\draw[line width = .7pt] \A node[anchor = east]{$\ss v_4$} -- \B node[anchor = north]{$\ss v_3$};
\draw[line width = .7pt] \A -- \B;
\draw[line width = .7pt] \B -- \C;
\draw[line width = .7pt] \D -- \B;
\draw[line width = .5pt, dashed] \A -- \C;

\fill[gray]\A circle (1.5pt);
\fill[gray]\B circle (1.5pt);
\fill[black]\C circle (1.5pt);
\fill[black]\D circle (1.5pt);

\draw(1.1,-2.15) node {$\ss |T|=2$};
\end{tikzpicture}

\\
\hline

\begin{tikzpicture} [scale=1.4]
\clip (-1.2,-2.25) rectangle (1.4,.35);

\def\A{(-.5,-1.25)} 
\def\B{(.5,-1.25)} 
\def\D{(0,-.25)} 

\fill[green] \A -- \B -- \D -- \A;
\draw[line width = .7pt] \A -- \D node[anchor = south]{$\ss v_1$};
\draw[line width = .7pt] \A node[anchor = east]{$\ss v_3$} -- \B node[anchor = west]{$\ss v_2$};
\draw[line width = .7pt] \A -- \B;
\draw[line width = .7pt] \D -- \B;

\fill\A circle (1.5pt);
\fill\B circle (1.5pt);
\fill\D circle (1.5pt);

\draw(1.1,-2.15) node {$\ss |T|=3$};
\end{tikzpicture}

&

\begin{tikzpicture} [scale=1.4]
\clip (-1.3,-2.35) rectangle (1.3,.25);

\def\A{(-.75,-.25)} 
\def\B{(-.75,-1.5)} 
\def\C{(.75,-.25)} 
\def\D{(.75,-1.5)} 
\def\E{(0,-.9)} 

\fill[blue] \A -- \E -- \B -- \A;
\fill[green] \C -- \D -- \E -- \E;
\draw[line width = .7pt] \E  -- \C node[anchor = west]{$\ss v_2$};
\draw[line width = .7pt] \E -- \D node[anchor = west]{$\ss v_3$};
\draw[line width = .7pt] \A node[anchor = east]{$\ss v_1$} -- \B node[anchor = east]{$\ss v_4$};
\draw[line width = .7pt] \A -- \E;
\draw[line width = .7pt] \B -- \C;
\draw[line width = .7pt] \C -- \D;
\draw \E+(0,.1) node[anchor = south]{$\ss v_5$};

\fill\A circle (1.5pt);
\fill\B circle (1.5pt);
\fill\C circle (1.5pt);
\fill\D circle (1.5pt);
\fill[gray]\E circle (1.5pt);

\draw(1.0,-2.25) node {$\ss |T|=4$};
\end{tikzpicture}

\\
\hline 
\end{tabular}

\caption{$T$-joins in $K^3_n$, for $0\leq |T|\leq 7$.}
\label{fig: T joins in a complete graph}
\end{figure}
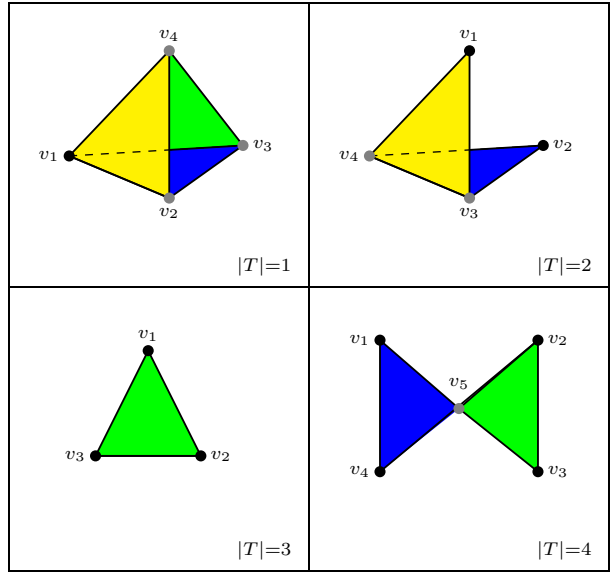
(In this figure, vertices in $T$ are shown in black, other vertices in gray.)
Assume now $|T|\geq 2$ and let us use induction to prove that 
$$\ts \tau(\H,T) \leq \lfloor \frac{|T|+1}{3}\rfloor +1.$$
For $|T|=2,3$ and $4$ this is proved by showing there is a $T$-join of cardinality $\leq 2$. For each $|T|$ 
this $T$-join is indicated in Figure~\ref{fig: T joins in a complete graph}.
Assume now that $|T|\geq 5$. Consider any three elements of $T$ say $v_1,v_2,v_3$, let 
$T'=T\setminus \set{v_1,v_2,v_3}$ and, by induction, let $J$ be a $T'$-join of cardinality 
less than or equal to $\lfloor \frac{|T|+1}{3}\rfloor$. Then 
$$
J = J\sd \set{\set{v_1,v_2,v_3}}
$$
is a $T$-join with cardinality less than or equal to $\lfloor \frac{|T|+1}{3}\rfloor +1$.
\medskip 

\noindent 
Since $|T|\leq n$ and $n\geq 5$, so far we have shown that a minimum cardinality $T$-join has cardinality less than 
or equal to $\lfloor \frac{n+1}{3}\rfloor+1$. Hence 
$\ts \mup(\H) \leq \lfloor \frac{n+1}{3}\rfloor+1$.
To show the opposite inequality we only need to prove that there exists $T\subset V_\H$ 
such that any $T$-join has cardinality at least $\lfloor \frac{n+1}{3}\rfloor+1$. 
Let $T\subset V_\H$ be any set of $3\lfloor\frac{n+1}{3}\rfloor-1$ vertices of $\H$ 
and let $J$ be a $T$-join. Then, from 
\begin{equation}\label{eq: L1491}
\ts \sum_{v\in T}\deg_{J}(v) + \sum_{v\not \in T}\deg_{J}(v) = 3|J|
\end{equation}
we get 
$$
\ts 3|J|\geq |T| = 3\lfloor\frac{n+1}{3}\rfloor-1 \implies |J|\geq \lfloor\frac{n+1}{3}\rfloor.
$$
Now, $|J|=\lfloor\frac{n+1}{3}\rfloor$ is impossible since \eqref{eq: L1491} reduced modulo $2$ yields 
$$
\ts |J|\equiv_2 |T| \equiv_2\lfloor\frac{n+1}{3}\rfloor -1.
$$
We conclude that $|J|\geq \lfloor\frac{n+1}{3}\rfloor +1$.
\end{proof}

\end{document}